\title{\vspace{-1cm}Musical chairs}
\author{Yehuda Afek \thanks{The Blavatnik School of Computer Science, Tel-Aviv University,
                Israel 69978. afek@tau.ac.il}
                 \and Yakov Babichenko \thanks{Department of Mathematics, Hebrew
                 University, Jerusalem 91904, Israel yak@math.huji.ac.il}
                 \and Uriel Feige \thanks{Department of Computer Science
and Applied Mathematics Weizmann Institute of Science Rehovot 76100, Israel.
uriel.feige@weizmann.ac.il. The author holds the
Lawrence G. Horowitz Professorial Chair at the Weizmann Institute. Work supported in part by The Israel
Science Foundation (grant No. 873/08).}
                 \and Eli Gafni \thanks{Computer Science Department, University of California, LA, CA 95024, eli@cs.ucla.edu }
                 \and Nati Linial \thanks{School of Computer Science and Engineering, Hebrew
                 University, Jerusalem 91904, Israel nati@cs.huji.ac.il}
                 \and Benny Sudakov \thanks{Department of Mathematics, UCLA, Los Angeles, CA, 90095. bsudakov@math.ucla.edu.
Research supported in part by NSF grant DMS-1101185 and by a USA-Israeli BSF grant.}
                 }
\date{}
\newtheorem{thm}{Theorem}
\newtheorem{cor}[thm]{Corollary}
\newtheorem{lemma}[thm]{Lemma}
\newtheorem{definition}[thm]{Definition}
\newtheorem{proposition}[thm]{Proposition}
\newenvironment{proof}{\noindent\bf{Proof.}\rm}{\hfill$\Box$\bigskip}
\def\MM{m}
\begin{document}
\maketitle

\begin{abstract}
In the {\em Musical Chairs} game $MC(n,m)$ a team of $n$ players  plays
against an adversarial {\em scheduler}. The scheduler wins  if the game proceeds indefinitely, while termination after a finite number of rounds is declared a win of the team. At each round of the game each player
 {\em occupies} one of the $m$ available {\em chairs}. Termination (and a win of the team) is declared as soon as each player occupies a unique chair. Two players that simultaneously occupy the same chair are said to be {\em in conflict}. In other words, termination (and a win for the team) is reached as soon as there are no conflicts.  The only means of communication throughout the game is this: At every round of the game, the scheduler selects  an arbitrary nonempty set of players who are currently in conflict, and notifies each of them separately
that it must move. A player who is thus notified changes its chair according to its deterministic program.
As we show, for $m\ge 2n-1$ chairs the team has a winning strategy. Moreover, using topological arguments we show that this bound is tight. For
$m\leq 2n-2$ the scheduler has a strategy that is guaranteed to make the game continue indefinitely and thus win.
We also have some results on additional interesting questions. For example, if $m \ge 2n-1$ (so that the team can win), how quickly can they  achieve victory?
\end{abstract}

\newpage

\section{Introduction}
Communication is a crucial ingredient in every kind of
collaborative work. But what is the least possible amount of
communication required for a team of players to achieve certain goals?
Motivated by this question, we consider in this paper the following game.

The {\em Musical Chairs} game $MC(n,m)$  involves
$m$ {\em chairs} numbered
$1,\ldots,m$ and a team of $n$ {\em
players} $P_1,\ldots,P_n$, who are playing against
an adversarial {\em scheduler}. The scheduler's goal is to make the game run indefinitely in which case he wins. The termination condition is that each player settles in a different chair. Upon termination the team of players is declared the winner. We say that player $P$ is
{\em in conflict} if some other player $Q$
is presently occupying the same chair as $P$.
Namely, termination and a win of the team is reached if there are no
conflicts. The scheduler gets to decide, for each player $P_i$ the chair that $P_i$ occupies at the start of the game. As mentioned above we severely
restrict the communication between the players during the game. All such communication is mediated by the scheduler as follows: At every time step, and as long as there are conflicts, the scheduler selects an arbitrary nonempty set of players which are currently in conflict and notifies them
that they need to move. A player thus notified to be in conflict
changes its chair according to its deterministic program, which he chooses before the game.
During the game, each player has no information about the chairs of
other players beyond the occasional one bit that tells him that he must
move, and we insist that the choice of a player's next chair be deterministic. Consequently, a player's action depends only on its current chair, and the sequence of chairs that he had traversed so far.
Therefore the sequence of chairs that player $P_i$
traverses is simply an infinite word $\pi_i$ over the alphabet $1,\ldots,m$.
Recall that the adversary can start each player on an any of the chairs. Consequently we assume that each $\pi_i$ is {\em full}, i.e., it contains all the letters in $[m]$. So upon receiving a conflict notification from the
scheduler, player $P_i$ occupying chair $\pi_i[k]$ moves to chair
$\pi_i[k+1]$. The scheduler's freedom in choosing the players' initial chairs means that for every $i$ he selects an index $k_i$ and the game starts with each $P_i$ occupying chair $\pi_i[k_i]$. A winning strategy for the players is a choice of full words $\pi_i$ with the following property:
For every choice of initial positions $k_i$ and for every strategy of the scheduler the game terminates in a finite number of rounds, i.e., the players cannot be beaten by the scheduler.

In this paper we obtain several results about the musical chairs game. Our first theorem determines the  minimal $m$ for which the team of players wins the
$MC(n,m)$ game.

\begin{thm}
\label{thm:mainfull}
The team of players has a winning strategy in the $MC(n,m)$ game if and only if $m \ge 2n-1$.
\end{thm}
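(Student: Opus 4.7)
Plan. The theorem has two halves --- the positive construction when $m \ge 2n-1$ and the impossibility when $m \le 2n-2$ --- and the plan is to tackle them by quite different techniques.

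For the sufficiency direction, I would exhibit explicit words $\pi_1,\ldots,\pi_n$ and prove termination via a potential argument. A first naive try is to let each player eventually settle at a dedicated chair, e.g.\ $\pi_i = 1,2,\ldots,m,(n+i-1)^{\omega}$, so that the $n$ settlement chairs $n, n+1,\ldots,2n-1$ are distinct and fit inside $[m]$. This attempt already fails: the constant tail means that a notified player does not actually change chairs, so the scheduler can pin a player on a bad chair indefinitely while never notifying its partner in the conflict. A better candidate is to let $\pi_i$ be eventually periodic with small period over the ``high'' chairs $\{n,\ldots,2n-1\}$, with different players using different phases, plus an exploratory prefix ensuring fullness. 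To establish termination I would then define a lexicographic potential on configurations --- for instance, the number of players still in their exploratory prefix, with ties broken by the current conflict count --- and verify that any nonempty scheduler notification strictly decreases it. The key design challenge is to accommodate the scheduler's freedom to notify an arbitrary subset of conflicted players, so that even notifying a single player must already make measurable progress.

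For the necessity direction, the plan is a topological argument, as hinted by the abstract. The game is essentially an asynchronous renaming task: each player must commit to a distinct chair using only the binary ``move or not'' signals from the scheduler as communication. I would model the configuration space as a simplicial complex whose vertices are (player, chair) pairs and whose top simplices record compatible chair assignments; terminating configurations form a distinguished subcomplex. A Sperner-type chromatic-subdivision argument should show that when $m \le 2n-2$ there is no simplicial map of the right form from an iterated subdivision of $\Delta^{n-1}$ into the termination subcomplex, because there are not enough ``colors'' (chairs) to properly label the subdivision. From this non-existence one extracts an inductive adversarial strategy that always keeps the configuration in the ``bad'' region where some two players share a chair.

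The main obstacle will be this lower bound. Turning the topological obstruction into an explicit infinite-round scheduler strategy --- rather than just a one-step argument --- requires a compactness / K\"onig-style step combined with careful induction, under the constraint that only currently conflicted players may be notified. Verifying that the chromatic/Sperner argument really applies in the specific format of deterministic full infinite words (rather than in the standard snapshot/immediate-snapshot models of distributed computing) is the delicate step I expect will consume most of the work.
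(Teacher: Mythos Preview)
Your lower-bound plan is in the right spirit and close to what the paper does: a Sperner-type argument is indeed the key. The paper's implementation, however, is somewhat different from the standard ``no simplicial map from an iterated chromatic subdivision'' template you sketch. Instead of invoking the general distributed-computing machinery, the paper builds a concrete $(n-1)$-dimensional pseudomanifold directly from the game dynamics: vertices are player states, facets are reachable configurations, and each scheduler move subdivides an edge (and its incident facets). A clever $2$-coloring of vertices by partitioning the $2n-2$ chairs into two halves, combined with a parity version of Sperner's lemma, guarantees that after every subdivision there is still a non-auxiliary monochromatic facet---hence a reachable configuration with a conflict---so the process never terminates. This avoids both the compactness/K\"onig step you anticipate and the need to embed the oblivious model into a standard snapshot model.

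Your upper-bound plan, by contrast, has a genuine gap. The proposed construction---eventually periodic over the ``high'' chairs $\{n,\ldots,2n-1\}$ with an exploratory prefix for fullness---cannot work, and no potential of the kind you describe will rescue it. Once every player has left its prefix, you have $n$ players each cycling over the \emph{same} set of $n$ chairs; this is exactly an $MC(n,n)$ instance, which the scheduler wins (the paper notes that $m=n$ works only synchronously). Concretely, the lexicographic potential you propose bottoms out at ``zero players in prefix,'' and at that point the tie-breaker (conflict count) is not monotone under single-player moves, so the argument stalls. The actual construction in the paper is far from a simple phase-shifted cycle: it is a recursive build-up in which one alternately adds a chair and then a chair plus a player, maintaining at each stage a strong invariant (every restriction of the first $p$ words to any $(2p-1)$-subset of chairs is terminal). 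The resulting words are doubly exponential in $n$, and the termination proof proceeds by showing that certain concatenation and interleaving operations preserve terminality (the $c\otimes\pi$ operation and a lemma on appending arbitrary suffixes to one word in a terminal family), not by a global potential.
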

In the winning strategy that we produce, each word $\pi_i$ is {\em periodic}, or, what is the same, a finite word that $P_i$ traverses cyclically.
We also show that for every $N>n$ there exist $N$ full cyclic words on the alphabet $[m]=[2n-1]$ such that every set of
$n$ out of these $N$ words constitutes a winning strategy for the $MC(n,2n-1)$ game.

To prove the lower bound in Theorem \ref{thm:mainfull} we use Sperner's lemma (see, e.g., \cite{AZ}) a fundamental tool from combinatorial topology. The use of this tool in proving lower bounds for distributed algorithms was pioneered in
\cite{BG, HS99, SZ00}. If one is willing to assume a great deal of knowledge  in the field of distributed computing, it is possible to deduce our lower bound from known results in this area.
Instead, to make the paper self contained, we chose to include here a direct proof which we think is more illuminating and somewhat simpler than the one which would result from reductions to the existing literature.

Although the words in Theorem~\ref{thm:mainfull} use the least number of
chairs, namely $m=2n-1$, their {\em lengths} are
doubly exponential in $n$. This leads to several interesting questions.
Are there winning strategies for the MC game
with much shorter words, say of length $O(n)$? Perhaps even of length
$m$? Can we provide significantly better upper bounds on the number of rounds till termination? Even if the schedular is bound to lose the game, how long can he make the game last?
Our next two results give some answers to these questions.
Here we consider an $MC(n,m)$ {\em winning systems} with
$N$ words. This is a collection
of $N \geq n$ full words on $[m]$, every $n$ of which constitute a winning strategy for the players in the
$MC(n, m)$ game.

\begin{thm}
\label{thm:7n} For every $N \ge n$, almost every choice of $N$ words of length $cn\log N$ in an alphabet of $m=7n$ letters
is an $MC(n,m)$ winning system with $N$ full words. Moreover, every game on these words terminates in $O(n\log N)$ steps. Here $c$ is an
absolute constant.
\end{thm}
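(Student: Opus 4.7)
The natural approach is the probabilistic method: sample each of the $N$ words independently by drawing each of its $L=cn\log N$ letters uniformly and independently from $[m]=[7n]$, for an absolute constant $c$ to be chosen later. I would argue that with high probability this random choice simultaneously yields $N$ full words, a winning strategy for every $n$-subset, and a termination time of at most $T=Cn\log N$ for every resulting game. Fullness is handled by a routine coupon-collector estimate: a fixed letter is absent from a fixed random word with probability $(1-1/m)^L\le e^{-L/m}=N^{-c/7}$, so a union bound over the $Nm$ (word, letter) pairs gives failure probability $\le 7nN^{1-c/7}=o(1)$ for $c$ large.

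The heart of the argument is a simultaneous union bound over all triples $(S,k,\sigma)$, where $S$ is an $n$-subset of the $N$ words, $k\in[L]^n$ is an adversarial starting configuration, and $\sigma=(A_0,\ldots,A_{T-1})$ specifies the non-empty subset of players notified at each of the $T$ rounds. Call such a triple \emph{bad} if the induced sequence of configurations is conflicting at every intermediate time $t<T$; dropping the natural restriction $A_t\subseteq\mathrm{Conflict}_t$ only strengthens the adversary, so it suffices to bound the relaxed event. The key leverage is the generosity of the alphabet: $m=7n$ is considerably larger than $n$, so whenever a player is notified, the fresh uniform letter it reveals misses all at most $n-1$ chairs of the remaining players with probability at least $1-(n-1)/m\ge 6/7$. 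Amplified correctly, this per-move gain should compound into a multiplicative decay of the per-triple survival probability that beats the count of triples in the union bound.

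The main technical obstacle is making this balance close: naively the number of schedules alone is $(2^n-1)^T$, contributing a factor of order $2^{\Theta(n^2\log N)}$ to the union count. To beat it, I would group consecutive rounds into \emph{blocks} of $\Theta(n)$ moves each and argue that within every block enough players must advance to fresh random chairs that the resulting configuration is collision-free with probability at least $(6/7)^n=e^{-\Omega(n)}$, invoking the standard fact that a uniform placement of $n$ balls into $7n$ bins is collision-free with this probability. Iterating across the $\Theta(\log N)$ blocks in $T=\Theta(n\log N)$ rounds, and combining this with a more refined accounting of schedules---for instance by counting distinct configuration trajectories rather than raw schedule sequences, or by canonicalising the adversary to a ``one notification per round'' schedule---should yield a per-triple survival probability small enough for the union bound to go through, after which the constants $c$ and $C$ are tuned to close the argument.
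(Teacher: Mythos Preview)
Your setup and the fullness calculation are fine, and you correctly identify the crux: the raw schedule count $(2^n-1)^T$ swamps any per-step probability gain. But neither of your proposed fixes actually closes this gap.

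The block argument fails quantitatively. If each block of $\Theta(n)$ moves yields a collision-free configuration with probability only $(6/7)^n = e^{-\Theta(n)}$, then across the $\Theta(\log N)$ blocks in $T=\Theta(n\log N)$ rounds the survival probability is at most $(1-(6/7)^n)^{\Theta(\log N)}$, which is essentially $1$ for large $n$. This does not even beat the ${N\choose n}L^n$ count of subsets and starting positions, let alone the schedule count. You would need a per-block termination probability bounded \emph{below} by a constant, not one that is exponentially small in $n$.

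The ``one notification per round'' canonicalisation does not help enough either: even if only one player moves per round, the scheduler still chooses \emph{which} conflicting player, giving up to $n$ options per step and hence $n^T = 2^{\Theta(n\log n\log N)}$ schedules, still too many. The paper's actual proof rests on two ingredients you are missing. First, an equivalence (proved separately) to a \emph{canonical scheduler} in which the conflicting pair at each step is \emph{deterministically} fixed (say, the first pair sharing a chair in some fixed order), so the scheduler has exactly three options: move the first, the second, or both. This cuts the schedule tree to a ternary tree of depth $L$. Second, a potential function $\Phi = x^{n-i}$ where $i$ is the number of occupied chairs; one computes that, over the three children of any unsafe node, the sum of expected potentials is at most $r\cdot\Phi$ with $r<1$ (specifically $r<0.99$ at $m=7n$, $x=23/2$). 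Thus the expected total potential at depth $L$ is at most $x^{n-1}r^L$, and a union bound over ${N\choose n}L^n$ choices finishes once $L=cn\log N$ with $c$ large. The potential function is what converts ``one fresh random chair misses with probability $6/7$'' into a multiplicative decay that is uniform over configurations and summable over the three scheduler options.
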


Since we are dealing with full words which we seek to make short, we are
ultimately led to consider the problem under the assumption that each (finite, cyclically traversed) word $\pi_i$ is a
{\em permutation} on $[m]$. We note that the context of distrubted computing offers no particular reason for this restriction and that we are motivated to study this
question due to its aesthetic appeal. We can design permutation-based
winning strategies for $MC(n,2n-1)$ game for very small $n$ (provably for
$n=3$, a computer assisted construction and proof for $n=4$). We suspect that no such
constructions are possible for large values of $n$, but we are
unable at present to show this.  We do know, though that
\begin{thm}
\label{thm:permutations} For every integer $d \ge 1$ there is an $MC(n,m)$ winning system with $N = n^d$ {\em permutations} on $m = cn$ symbols, where $c$ depends only on $d$. In fact, this
holds for almost every choice of $N$ permutations on $[m]$.
\end{thm}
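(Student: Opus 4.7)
The plan is to mirror the random construction from Theorem~\ref{thm:7n}: pick $N = n^d$ independent uniformly random permutations $\pi_1,\ldots,\pi_N$ on $[m]$, with $m = cn$ for a constant $c = c(d)$ to be chosen large, and show by a union bound that with probability tending to $1$ every $n$-subset of the $\pi_i$'s is a winning system. Since there are at most $\binom{N}{n} \leq N^n = n^{dn}$ such subsets, it suffices to prove that any fixed $n$-subset fails to be a winning strategy with probability $o(n^{-dn})$. This will simultaneously establish the existence statement and the ``almost every'' strengthening.

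Fix such a subset, say $\pi_1,\ldots,\pi_n$. Since the state space of the game has size $m^n$, the players lose iff there is a cycle of configurations in the state graph in which every configuration contains a conflict and every transition is a legal scheduler move. I would enumerate such candidate cycles via a combinatorial blueprint: (i) the subset $A \subseteq [n]$ of \emph{active} players (those who move during the cycle), (ii) for each $i \in A$ a multiplicity $k_i \geq 1$ counting how many times $i$ traverses $\pi_i$ in one pass of the cycle, so $i$ makes $k_i m$ moves, (iii) the per-round pattern specifying which active players move in each round (a nonempty subset of those currently in conflict), and (iv) the initial positions. For each blueprint the probability that a random choice of the $\pi_i$'s realizes it decomposes round by round: each required ``conflict exists'' event pins an entry of some $\pi_j$ to lie in a set of at most $n-1$ values, contributing a factor of order $n/(m - \text{entries already exposed})$, i.e.\ $O(1/c)$ so long as only a small fraction of any single $\pi_j$ has been exposed.

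The main difficulty, compared with Theorem~\ref{thm:7n}, lies in the lack of independence among entries of a uniformly random permutation: once a constant fraction of a $\pi_i$'s entries are conditioned on, the rest are heavily constrained. I would handle this by splitting into cases. For \emph{short} cycles, in which each permutation is exposed at only $O(n)$ positions, the first $O(n)$ entries of a random permutation on $[cn]$ are approximately independent and uniform, so the per-blueprint probability estimate $c^{-\Omega(n)}$ mirrors the i.i.d.\ word analysis of Theorem~\ref{thm:7n}. For \emph{long} cycles, the sheer number of required conflict events (far exceeding $n$) already makes realization exponentially unlikely regardless of the mild dependence, and this beats the number of blueprints of that length. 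Summing over all blueprints yields a failure probability of the form $c^{-\Omega(n)}$ per fixed $n$-subset; choosing $c = c(d)$ large enough forces this to be $o(n^{-dn})$, which closes the union bound and proves the theorem.
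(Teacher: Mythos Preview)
Your proposal has a genuine gap, and it stems from the very dichotomy you set up. In a cycle of the state graph, every \emph{active} player must return to its starting position on its word; since each word is a permutation of length $m$, an active player makes a positive multiple of $m$ moves and therefore exposes \emph{all} $m=cn$ entries of its permutation. Consequently your ``short cycle'' case, in which each permutation is exposed at only $O(n)$ positions, is vacuous: there are no such cycles. Every cycle is a ``long cycle'' in your terminology.

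For long cycles your argument says that ``the sheer number of required conflict events \dots\ already makes realization exponentially unlikely regardless of the mild dependence.'' This is where the approach breaks. The dependence is not mild: once a player's permutation has been fully revealed (which happens after its first $m$ moves), every subsequent conflict event involving that player is a \emph{deterministic} function of the already-revealed entries and contributes no additional probability drop. So the factor you hope to gain per round simply is not there for rounds past the first traversal, while the number of blueprints (per-round move patterns) keeps growing. The union bound does not close.

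The paper's argument is organized quite differently and is designed precisely to get around this wrap-around obstruction. Rather than enumerating cycles, it bounds the length of any schedule by $L=O(m\log m)$ and partitions a putative length-$L$ schedule into blocks of size $t=\delta m$. Players are then classified as \emph{light} (making at most $t/\log m$ moves overall, so their exposed entries remain a $o(1)$ fraction of $m$ and behave nearly independently) or \emph{heavy}. The crucial observation is that there can be only $O(\log^2 m)$ heavy players. A separate lemma shows that, with high probability over the random permutations, any union of intervals of total volume $\Theta(t)$ spread over $O(\log^2 m)$ permutations contains a large number of \emph{unique} chairs; these unique chairs must then also be hit by light players, which forces a genuine probability cost of $8^{-t}$ per block, uniformly over the conditioning from earlier blocks. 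Multiplying over $L/t$ blocks and taking a union bound over the (suitably counted) ``breakpoints profiles'' finishes the proof. The extension from $N=n$ to $N=n^d$ is then a routine adjustment of the union bound, as you anticipated. The heavy/light mechanism is the missing idea in your sketch.
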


We should stress that our proofs of Theorems~\ref{thm:7n}
and~\ref{thm:permutations} are purely existential. The explicit
construction of such systems of words remains largely open, though
we have the following result in this direction.

\begin{thm}
\label{thm:explicitperm} For every integer $d \ge 1$ there is an $MC(n,m)$ winning system with $N = n^d$ {\em permutations} on $m =  O(d^2 n^2)$ symbols.
\end{thm}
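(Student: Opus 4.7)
The plan is to derandomize Theorem~\ref{thm:permutations} via an explicit algebraic construction based on low-degree polynomials over a finite field, accepting a polynomial blowup in the alphabet size. Let $p$ be a prime in $[n,2n]$ (which exists by Bertrand's postulate), and let $\mathcal{F}_d$ be the family of polynomials of degree less than $d$ over $\mathbb{F}_p$, which has size $p^d\ge n^d$; fix any subset of $\mathcal{F}_d$ of size exactly $n^d$. To each $f$ in this subset I associate a permutation $\pi_f$ of a chair set $[m]$ with $m=O(d^2n^2)$, so that every $n$-tuple of these permutations forms an $MC(n,m)$ winning system.

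The chair set is partitioned into $T=\Theta(d^2n)$ \emph{layers} of $p$ chairs each. Under a fixed global ordering of the layers, $\pi_f$ traverses the $p$ chairs of layer $t$ cyclically, starting at intra-layer offset $f(t\bmod p)\in\mathbb{F}_p$. A player following $\pi_f$ is thus described at each moment by a (layer, offset) pair; a scheduler notification advances its offset by $1$ and, upon wraparound inside a layer, transitions the player to the next layer. Two players simultaneously present in layer $t$ at offsets $o_i,o_j$ share a chair iff $o_i-o_j\equiv f_j(t)-f_i(t)\pmod p$, so a conflict between a specific pair can occur in layer $t$ at exactly one offset difference, determined by the polynomials.

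To prove termination for any chosen tuple $f_1,\dots,f_n$, I would exploit the Reed--Solomon distance bound: any two distinct polynomials of degree less than $d$ agree at fewer than $d$ points of $\mathbb{F}_p$. Consequently the number of \emph{bad} layers---those in which some pair $(f_i,f_j)$ coincides at $t\bmod p$---is at most $\binom{n}{2}(d-1)=O(n^2 d)$, and the remaining layers are \emph{good}. The key claim is that in a good layer any set of notified conflicting players produces only $O(p)$ intra-layer conflicts before at least one player is forced out of the layer, while a bad layer's contribution is bounded similarly by its width $p=O(n)$. Since each player traverses each layer at most once per full cycle, a potential summing each player's remaining layer crossings (with a secondary weight on intra-layer advances) strictly decreases under every nonterminating scheduler action, proving termination in $O(d^2n^2)$ rounds.

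The main obstacle is the concurrent interaction of several players within a common layer, where an adversarial scheduler might attempt to maintain a rolling chain of pairwise conflicts in which one player's forced move immediately generates a fresh conflict with the next. To control this I would choose the constant in $T=Cd^2 n$ large enough that the ample supply of good layers between bad ones buffers any incipient chain, and I would verify the potential-decrease invariant by case analysis on whether the notified players are confined to a single layer or span a layer boundary. With $Tp=O(d^2n^2)$, the claimed chair bound follows immediately, and the $n^d$ permutations $\pi_f$ supply the required winning system.
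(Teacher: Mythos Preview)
Your layered construction is genuinely different from the paper's, and the termination argument you sketch does not close. The crucial structural fact is that in your design every permutation traverses each layer in the \emph{same} cyclic order, differing only in the entry point $f(t\bmod p)$. Two cyclic shifts of $0,1,\dots,p-1$ by $a$ and $b$ have a common subsequence of length $\max(|a-b|,\,p-|a-b|)\ge p/2$, and since the layers are disjoint chair sets visited in the same order this adds up across layers to $LCS(\pi_f,\pi_g)\ge m/2$ for every pair. So the clean criterion the paper actually uses (if $m>(n-1)\max_{i\ne j}LCS(\pi_i,\pi_j)$ then no permutation can be fully traversed) is vacuous for your family once $n\ge 3$. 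Your potential argument would thus have to carry the entire load, but as written it cannot: inside any layer (good or bad) two conflicting players moved simultaneously stay in conflict until one of them hits the layer boundary, so the ``good layer'' label gives no control; the bad-layer count $\binom{n}{2}(d-1)$ omits the factor $\lceil T/p\rceil=\Theta(d^2)$ coming from the reduction $t\mapsto t\bmod p$, and with $p\approx n$ the number of bad residues $\binom{n}{2}(d-1)$ can already exceed $p$; and ``remaining layer crossings'' is undefined for a cyclic word, so nothing in your sketch excludes a schedule that cycles a player through all $T$ layers and back. I do not see how the case analysis you propose at layer boundaries distinguishes your construction from, say, $d=1$ with constant offsets, where the intra-layer picture is identical in every layer.

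The paper's route is much shorter because it engineers the permutations to have \emph{small} pairwise LCS and then invokes a one-line pigeonhole. Take $m=p^2$ and identify chairs with $\mathbb{F}_p\times\mathbb{F}_p$; for each polynomial $f$ of degree $2d$ with $f(0)=0$ let $\pi_f$ list the blocks $B_{f,j}=\{(x,f(x)+j):x\in\mathbb{F}_p\}$ for $j=0,1,\dots,p-1$. Two such polynomials agree in at most $2d$ points, so any two blocks $B_{f,i}$ and $B_{g,j}$ share at most $2d$ chairs, giving $LCS(\pi_f,\pi_g)\le 4dp=4d\sqrt m$. Now if $m>(n-1)\cdot 4d\sqrt m$, i.e.\ $m>16d^2(n-1)^2$, the first player to complete a full traversal would have at least $m/(n-1)>4d\sqrt m$ ordered agreements with some single other player, contradicting the LCS bound; hence every schedule terminates. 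The essential difference from your attempt is that the paper varies the \emph{block content} with $f$ (graphs of different polynomials), not merely the starting offset within a fixed cyclic order; that is exactly what drives the LCS down from $\Theta(m)$ to $\Theta(d\sqrt m)$ and makes the termination argument a single sentence.
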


We conclude this introduction with a discussion of several additional aspects of the subject.

Our work was originally motivated by some questions in distributed computing.
In every distributed algorithm each processor must occasionally
observe the activities of other processors. This can be done
by reading the messages that other processors send, by inspecting some
publicly accessible memory cells into which they write, or by
sensing an effect on the environment due to the actions of other
processors. Hence it is very natural to ask: What is the least possible amount of
communication required to achieve certain goals? To answer it, we consider two severe
limitations on the processors' behavior and ask how this affects
the system's computational power. First, a processor can only post a
proposal for its own output, and second, each processor is
``blindfolded'' and is only occasionally provided with the least
possible amount of information, namely a single bit that indicates
whether its current state is ``good'' or ``bad''. Here
``bad/good'' indicates whether or not this state conflicts with
the global-state desired by all the processors. Moreover, we also
impose the requirement that algorithms are deterministic, i.e., use no
randomization. This new minimalist model, which we call the {\em oblivious model}, was introduced in the
conference version of this paper \cite{ABFGLS}. This model might appear to be
significantly weaker than other (deterministic) models studied in
distributed computing. Yet, our results show  that a very natural distributed problem
{\em musical chairs} \cite{GR05}, can be solved optimally within
the highly limited oblivious model. Further discussion of the oblivious model and
additional well-known problems like {\em renaming} \cite{AttiyaBDPR90,AAdaptiveRn} which we can also solve optimally in this model
can be found in \cite{ABFGLS}.

A winning strategy for the
$MC(n,m)$ game cannot include any two identical words. For that allows the scheduler to move the corresponding players together in
lock-step, keeping them constantly in a state of conflict. Also for every
winning strategy for $MC(n,m)$, with finite cyclic words, there is a finite
upper bound on the number of moves till
termination. To see this, let us associate with every state of the system a vector whose $i$-th coordinate is the current position of player $P_i$ on $\pi_i$. The set of such vectors $V$ is finite, $|V| = \prod |\pi_i|$, and in a terminating sequence of moves no vector can be
visited twice. In fact, we can associate with every collection of finite words a directed graph on vertex set $V$, where edges correspond to the possible transitions in response to scheduler's notifications. The
collection of words constitute a winning MC strategy iff this directed
graph is acyclic. We note that these observations depend on the assumption that players use no randomness.

Our strategies for the MC game have a number of additional
desirable properties. As mentioned, we construct $N$ full periodic words such that every subset of $n$ of the $N$ words constitutes an $MC(n,m)$ winning system. Hence our strategies are guaranteed to succeed (reach termination against every scheduler's strategy) in dynamic settings in which
the set of players in the system keeps changing. This statement holds provided there are
sufficiently long intervals throughout which the set of players remains unchanged.
To illustrate this idea, consider a
company that manufactures $N$ communication devices, each of which
can use any one of $m$ frequencies. If several such devices happen
to be at the same vicinity, and simultaneously transmit at the
same frequency, then interference occurs. Devices can move in
or out of the area, hop to a frequency of choice and transmit
at this frequency, sense whether there are other
transmissions in this frequency. The company wants to provide the
following guarantee: If no more than $n$ devices reside in the
same geographical area, then no device will suffer more than a total of $T$
interference events for some guaranteed bound $T$. Our strategy for the
MC game would yield this by pre-installing in each
device a list of frequencies (a {\em word} in our terminology),
and having the device hop to the next frequency on its list (in a
cyclic fashion) in response to any interference it encounters. No
communication beyond the ability to sense interference is needed.

In proving the lower bound $m \ge 2n-1$ we have to make several assumptions about the setup. The first
is the freedom of choice for the scheduler. From the perspective of distributed computing this means that we are dealing with
an asynchronous system. In a synchronous setting, in every time step, every player involved in a conflict
moves to its next state. One can show that in such a synchronous setup the players have a winning strategy even with $m = n$ chairs.
It is also important that the scheduler can dictate each player's starting position. If each $P_i$ starts at the first letter of $\pi_i$,
a trivial winning strategy with $m=n$ simply sets $i$ as the first letter of $\pi_i$ for each $i$. It is also crucial that
our players are deterministic (no randomization). If players are allowed to pick their next state randomly, then again
$m = n$ suffices, since in this case with probability~1 eventually a conflict-free configuration
will be reached. Hence, this paper is also related to the one of the fascinating questions in computer science, whether and to what extent
randomization increases the power of algorithmic procedures. Our results show that
without using randomness one can still win an MC game by increasing only slightly the number of chairs (from $n$ to $2n-1$).

\section{Simplified Oblivious Model for Musical Chairs}

Our general model for oblivious algorithms is specified by providing the rules of possible behavior of the scheduler. Here we consider an
{\em immediate scheduler}, who enjoys a high degree of freedom in choosing which
processor to move. To simplify the design and analysis of oblivious
algorithms, it is convenient to consider a more restricted scheduler that
has fewer degrees of freedom, but is nevertheless equivalent to
the immediate scheduler in their power to win the MC game. In each round an immediate scheduler can select an
arbitrary nonempty set of players that are currently in conflict and move them. Below we often refer to a team strategy as an oblivious
$MC(n,m)$ algorithm. It is a winning strategy if the immediate scheduler is forced to reach a conflict-free configuration in finite time. Conversely, an immediate scheduler wins against an oblivious $MC(n,m)$ algorithm if it can generate an infinite execution without ever reaching a conflict-free configuration.

{\bf Terminology.} Two schedulers $\sigma_1, \sigma_2$ are considered {\em
equivalent} if for every team strategy scheduler $\sigma_1$ has a winning strategy  iff so does $\sigma_2$.

First we want to limit the number of processors that can be moved in a round.

A {\em pairwise immediate scheduler}  is similar to the immediate scheduler, except for the following restriction. In every round, the pairwise immediate scheduler can select any two processors $P \neq Q$ that are currently in conflict with each other, and move either $P$, or $Q$, or both. Equivalently, in every round
either exactly one processor (that is involved in a conflict) moves, or two processors that share the same chair move.

\begin{proposition}
The immediate scheduler and the pairwise immediate scheduler are equivalent.
\end{proposition}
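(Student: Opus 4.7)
The plan is to prove the two directions of the equivalence separately. For the easier direction, every pairwise immediate move---notifying one of two conflicting players, or both of them---is a legal immediate move, so any infinite pairwise play is automatically an infinite immediate play with the same initial positions and the same sequence of transitions; hence a pairwise scheduler winning against a team strategy yields an immediate scheduler winning against the same team strategy.

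For the converse, assume the immediate scheduler has an infinite play $C_0\to C_1\to\cdots$ against the team strategy, where round $i$ moves a nonempty set $S_i$ of currently conflicting players. I will build an infinite pairwise play by decomposing each round $C_i\to C_{i+1}$ into finitely many pairwise rounds that start at $C_i$ and end at $C_{i+1}$. To decompose the round moving $S$, partition $S$ by chair, $S=\bigsqcup_c S_c$, and treat the chairs with $S_c\neq\emptyset$ one at a time in an arbitrary fixed order. When it is chair $c$'s turn, if some non-$S_c$ player $Q$ currently sits at $c$---either because $Q$ was originally at $c$ and not in $S$, or because $Q$ arrived at $c$ during the processing of an earlier chair---perform $|S_c|$ single pairwise rounds, each notifying one $P\in S_c$ with $Q$ as partner and moving only $P$. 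Otherwise every occupant of $c$ lies in $S_c$, forcing $|S_c|\ge 2$; perform $\lfloor |S_c|/2\rfloor$ double pairwise rounds (pair up chair-mates and move both), preceded when $|S_c|$ is odd by one single $S_c$-$S_c$ round that moves just one of the two selected players. Once every chair has been handled the state is exactly $C_{i+1}$, and concatenating the decompositions across all $i$ produces an infinite pairwise play from initial configuration $C_0$.

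The main obstacle is verifying that no intermediate state of this simulation is conflict-free, since a conflict-free pairwise state would give the team a win that it did not have against the immediate scheduler. The terminal state $C_{i+1}$ of each decomposition is non-terminal by assumption, so only strictly intermediate states need checking. At any such point either we have not yet begun processing some chair $c_l$ with $S_{c_l}\neq\emptyset$, in which case all of $S_{c_l}$ still sits at $c_l$ together with whoever else was there in $C_i$, producing a conflict at $c_l$; or we are midway through processing the last such chair $c_t$, and a direct case check of the three pairwise patterns above shows that at least two players occupy $c_t$ up to and including the move immediately preceding the very last one of the decomposition. The pairwise option to move two chair-mates in one round is essential here: without it, a lone $S$-player could be stranded alone at $c_t$ with no remaining partner, breaking the simulation.
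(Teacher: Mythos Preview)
Your proof is correct. Both your argument and the paper's rest on the same idea---simulate each round of an infinite immediate run by finitely many pairwise rounds---but the execution differs. The paper proceeds by an iterative refinement: it locates the first round $t$ whose move set $S$ is not pairwise, and either peels off a single player (when all of $S$ sits on one chair) or peels off one chair-group $S(c)$ (when $S$ spans several chairs), then recurses via a double induction on $(t,|S|)$. You instead give the full decomposition of each round in one pass: partition $S$ by chair, process the chair-groups sequentially, and handle each group either by anchoring on a stationary witness $Q$ or by pairing up the group (with one preliminary single move when $|S_c|$ is odd).

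Your route is more explicitly constructive and makes the non-terminality of intermediate states visible; the paper's route is terser but tacitly relies on a limiting argument (the ``induction'' never terminates after finitely many steps, since $t\to\infty$; one really takes the limit of runs that agree on ever-longer pairwise prefixes). Your observation that the ``move both'' option is essential---to avoid stranding a lone $S_c$-player when no outside witness sits at $c$---is a point the paper leaves implicit.
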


\begin{proof}
The pairwise immediate scheduler is more restricted than the immediate scheduler. Hence it remains to show that if the immediate scheduler can win against some team strategy, then the pairwise immediate scheduler can also force an infinite run against it.
We prove this last statement by a double induction on the round $t$ and the number $k$ of processors that move in round $t$.

Let us fix an oblivious $MC(n,m)$ algorithm and an infinite run that is forced by the immediate scheduler. Let $t$ be the first round in which the moves are not
consistent with any pairwise immediate scheduler and let $k$ be the number of processors that move in round $t$. There are two cases to consider. In one case $k \ge 3$ and all the moving processors occupy the same chair in round $t$. Break round $t$ into two rounds, pushing future rounds by one. In the first of
them (round $t$) move only one of the processors from $S$ and in the second round (round $t+1$) move the rest (they can still move because there are at least two
of them). This completes the inductive step with respect to $t$. The other case is that the set $S$ of processors that moved in round $t$ collided on at least two
different chairs. Pick one of these chairs, say chair $c$, and let $S(c)$ be the set of those processors in $S$ that in round $t$ occupy chair $c$. Break
round $t$ into two rounds, pushing future rounds by one. In the first of them (round $t$) move only those processors in $S(c)$, and in the second round (round
$t+1$) move those processors in $S - S(c)$. This completes the inductive step (as either $k$ decreased or $t$ increased).
\end{proof}

The use of the pairwise immediate scheduler (which as we showed is equivalent to our original scheduler) helps simplify the proofs of
theorems~\ref{thm:mainfull} and~\ref{thm:permutations}. However, for the proof of Theorem~\ref{thm:7n} even the pairwise immediate
scheduler should be further restricted. It is true that it has to pick only one pair of players to move (and then either move only one or both of them), but
it is still free to pick a pair of its choice (among those pairs that are in conflict). We would like to eliminate this degree of freedom.

{\em Canonical Scheduler}. The canonical scheduler is similar to the pairwise immediate scheduler but with the following difference. In every round in which there
is a conflict, one designates a {\em canonical pair}. This is a pair of players currently in conflict with each other, but they are not chosen by the
scheduler, but rather dictated to the scheduler. Given the canonical pair $P, Q$, the only choice the scheduler has is
whether to move $P$, or $Q$, or both. But
how is the canonical pair chosen? In the current paper this does not really matter to us, as long as the choice is deterministic. For concreteness, we shall
assume the following procedure. Fix an arbitrary order on the collection of all pairs of players. In a nonterminal configuration, the {\em canonical pair} is the first pair of players in the order that share a chair.

\begin{proposition}
The canonical scheduler and the pairwise immediate scheduler are equivalent.
\end{proposition}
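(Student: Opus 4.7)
The forward direction is immediate, since every canonical-scheduler move is also a legal pairwise-immediate-scheduler move (the canonical scheduler has strictly fewer options at every round). Hence if the canonical scheduler has a winning (i.e.\ infinite-run) strategy against a given team strategy, so does the pairwise immediate scheduler.

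For the reverse direction, the plan is to fix a team strategy and work with the finite state space that the periodic words $\pi_i$ induce. Let $W$ be the set of configurations from which the pairwise immediate scheduler can force an infinite run, and let $W_c$ be the analogous set for the canonical scheduler. Both are greatest fixed points of the obvious ``has a move into the set'' operators, and $W_c\subseteq W$ is automatic. I would prove $W\subseteq W_c$ by showing that $W$ itself satisfies the defining property of $W_c$: for every $C\in W$ with canonical pair $(P,Q)$, at least one of the three canonical moves (move $P$ only, move $Q$ only, or move both) leads to a configuration again in $W$. Since $W_c$ is the \emph{largest} such set, this gives $W\subseteq W_c$ and hence $W=W_c$, which is exactly the equivalence of the two schedulers.

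To establish the key sub-claim, I would argue by contradiction. Suppose $C\in W$ has canonical pair $(P,Q)$ on chair $c$, and all three canonical moves from $C$ exit $W$. Then any pairwise winning strategy from $C$ must make its first move on a non-canonical pair $(P',Q')\neq(P,Q)$, producing some $C_1\in W$. When $\{P',Q'\}\cap\{P,Q\}=\emptyset$ the two moves act on disjoint players and therefore commute, so one may attempt to swap: perform a canonical move at $C$ first and then the pairwise move, ending at the same configuration. Using the finiteness of the state space, the pairwise infinite run from $C$ must eventually enter a cycle $D_1\to\cdots\to D_k\to D_1$ all of whose vertices lie in $W$, and I would exploit commutations along this cycle to exhibit a canonical-pair transition staying in $W$, contradicting the assumption on $C$.

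The main obstacle, which I expect to be the delicate part, is the non-disjoint or ``order-shifting'' case: after moving the canonical pair $(P,Q)$, the new canonical pair in the resulting configuration may be an \emph{earlier} pair in the fixed order than $(P,Q)$, because $P$'s or $Q$'s new chair may be occupied by some player $X$ with $(P,X)<(P,Q)$, creating a brand new earlier conflict. In this situation the naive commutation fails, and one must handle the shift of the canonical pair carefully. I would manage this with a secondary induction on the rank of the canonical pair in the fixed total order, noting that there are only finitely many pairs earlier than $(P,Q)$; any chain of ``displacements to earlier pairs'' must therefore bottom out, producing the desired canonical transition into $W$ and completing the contradiction.
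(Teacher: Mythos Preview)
Your fixed-point reformulation is a legitimate alternative to the paper's approach (which reschedules an infinite pairwise run by induction on the first round of inconsistency with the canonical rule), and the key sub-claim---that every $C\in W$ admits a canonical move back into $W$---is indeed equivalent to what is needed. But your argument for that sub-claim does not go through. The commutation observation alone yields no contradiction: if the non-canonical first move $m_1$ and a canonical move $m_2$ act on disjoint players, then performing them in either order lands at the same configuration $D$, but nothing forces $D\in W$, and by your contradiction hypothesis the intermediate state $m_2(C)$ lies outside $W$, so you cannot chain. Your fallback to a cycle in the state graph presupposes periodic words, which the proposition does not assume; even granting periodicity, ``exploiting commutations along the cycle'' is left unspecified, and finding a canonical transition out of some $D_i$ on the cycle does not contradict your hypothesis, which is about $C$. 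Finally, the ``order-shifting'' obstacle you highlight is a red herring: once some canonical move from $C$ lands in $W$ you are finished with $C$; the identity of the canonical pair at the successor configuration is irrelevant to the fixed-point property you are verifying.

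What is actually needed---and what the paper's proof supplies---is a case analysis on the infinite pairwise run from $C$ according to when and how $P$ and $Q$ first leave chair $c$: never (then move one of them now; the other remains on $c$ and serves as the conflict witness for every replayed move that originally used a conflict on $c$); one strictly before the other (pull that single move to the front, again using the stationary partner as witness for the intervening moves); or both in the same round $t'$ (split further on whether a third player occupies $c$ at some round $\le t'$, which dictates whether to pull forward the ``move both'' option or to stagger the two single moves). Each case directly exhibits a canonical successor of $C$ together with an infinite pairwise run from it, hence membership in $W$. Your commutation/cycle/rank-induction plan does not arrive at this case split, which is where the actual work lies; in particular, the delicate case is not ``order-shifting'' but rather the simultaneous-departure case, where neither member of the canonical pair remains on $c$ to witness replayed conflicts.
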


\begin{proof}

Consider an oblivious $MC(n,m)$ algorithm and an infinite run against the immediate scheduler. Let $t$ be the first round in which the run is
inconsistent with a canonical scheduler. That is, the canonical pair at round $t$ consists of two players, say $P_1$ and $P_2$, that
occupy the same chair, say chair $c_1$, whereas the immediate scheduler moved at least one player not from the canonical pair. We consider several cases.

{\em Case 1.} The immediate scheduler never moves $P_1$ in any round from $t$ onwards. In this case move $P_2$ in round $t$.  Note that all moves (except for the
move just performed, moving $P_2$ away from $c_1$) performed by the immediate scheduler from round $t$ onwards are still available to this scheduler (because
chair $c_1$ remains occupied). Hence the total number of moves in the schedule did not change, whereas $t$ increases by one, completing the inductive step. The
same argument can be applied with $P_1$ and $P_2$ exchanged.

{\em Case 2.} The immediate scheduler moves $P_2$ away from $c_1$ in a later round than it moves $P_1$. In this case move $P_1$ in round $t$. Again, all moves
(except for the move just performed, moving $P_1$ away from $c_1$) performed by the immediate scheduler from round $t$ onwards are still available to this
scheduler. The same argument can be applied with $P_1$ and $P_2$ exchanged.

{\em Case 3.} The immediate scheduler moves both $P_1$ and $P_2$ out of $c_1$ in the same round $t' \ge t$. There are two subcases to consider. In one, there is
no player other than $P_1$ and $P_2$ on chair $c_1$ in any of the rounds $t, \ldots, t'$. In this subcase, move $P_1$ and $P_2$ in round $t$ (pushing future
rounds by one). All moves performed by the immediate scheduler from round $t$ to $t'$ are still available to this scheduler. The other subcase is that there is
some round $t \le t" \le t'$ in which some other player say $P_3$ is on chair $c_1$. Consider the largest such $t"$. Move $P_1$ in round $t$ (pushing future
rounds by one) and $P_2$ in round $t"+1$ (the round that previous to the pushing of rounds was round $t"$), together with whoever else is moved at that round. \end{proof}

\noindent
{\bf Remark.} There are several interesting schedulers which are even more flexible than the immediate scheduler. As we showed in the conference version of this paper \cite{ABFGLS} all of them are, in fact, equivalent to the canonical scheduler.

\section{An oblivious MC algorithm with $2n-1$ chairs}

\label{sec:OMC}

\subsection{Preliminaries}
\label{prelim}

In this section we prove the upper bound that is stated in
Theorem~\ref{thm:mainfull}. We start with some preliminaries. The
length of a word $w$ is denoted by $|w|$. The concatenation of
words is denoted by $\circ$. The $r$-th power of $w$ is denoted by
$w^r = w \circ w \ldots \circ w$ ($r$ times). Given a word $\pi$
and a letter $c$, we denote by $c\otimes \pi $ the word in which
the letters are alternately $c$ and a letter from $\pi$ in
consecutive order. For example if $\pi =2343$ and $c=1$ then
$c\otimes \pi =12131413$. A collection of words $\pi _{1},\pi
_{2},...,\pi _{n}$ is called \textit{terminal} if no schedule can
fully traverse even one of the $\pi _{i}$. Note that we can
construct a terminal collection from any $MC$ algorithm just by
raising each word to a high enough power.

We now introduce some of our basic machinery in this area. We
first show how to extend terminal sets of words.

\begin{proposition}
\label{pro:full-equivalence} Let $n, m, N$ be integers with $1 < n
< m$. Let $\Pi = \{\pi_1, \ldots \pi_N\}$ be a collection of
$m$-full words such that
\begin{equation}
\label{every_n} \mbox{every~} n \mbox{~of these words form an
oblivious~}MC(n,m) \mbox{~algorithm}.
\end{equation}
Then $\Pi$ can be extended to a set of $N+1$ $m$-full words that
satisfy condition~(\ref{every_n}).
\end{proposition}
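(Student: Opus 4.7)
My plan is to first assume, without loss of generality, that $\Pi$ is \emph{terminal}: raising each $\pi_i$ to a sufficiently large power preserves $m$-fullness and condition~(\ref{every_n}) and ensures that in any game played on an $n$-subset of $\Pi$ no player fully traverses its word. I would then define
\[
\pi_{N+1} \;=\; \pi_1 \circ \pi_2 \circ \cdots \circ \pi_N,
\]
treated as a finite cyclic word (raising to a small power, or inserting a fixed letter via the $\otimes$ construction, suffices to make $\pi_{N+1}$ cyclically distinct from every $\pi_j$, as required for coexistence in a winning system). This $\pi_{N+1}$ is $m$-full, and each $\pi_j$ appears inside it as a contiguous \emph{segment}.

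To verify~(\ref{every_n}) the only nontrivial case is an $n$-subset $S=\{\pi_{N+1},\pi_{i_1},\ldots,\pi_{i_{n-1}}\}$ containing $\pi_{N+1}$. Since $N\ge n$, there exists some index $j^*\in[N]\setminus\{i_1,\ldots,i_{n-1}\}$. Suppose toward a contradiction that the game on $S$ admits an infinite execution. I would split on whether $P_{N+1}$ moves infinitely often or only finitely often.

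If $P_{N+1}$ moves infinitely often, then $\pi_{N+1}$ being finite and cyclic, $P_{N+1}$ eventually occupies at some time $T$ the first position of the $\pi_{j^*}$-segment. I then simulate, from time $T$ onwards, the game on the $n$-subset $\{\pi_{j^*},\pi_{i_1},\ldots,\pi_{i_{n-1}}\}\subseteq\Pi$ by starting $P_{j^*}$ at position $1$ of $\pi_{j^*}$ (so its chair coincides with $P_{N+1}$'s), leaving every other player at its current chair, and having the simulated scheduler copy the real one move-for-move with $P_{j^*}$ in the role of $P_{N+1}$. So long as $P_{N+1}$ remains inside its $\pi_{j^*}$-segment the two games have identical chair configurations and identical conflicts; hence surviving $|\pi_{j^*}|$ more moves of $P_{N+1}$ forces $P_{j^*}$ to complete a full pass of $\pi_{j^*}$ without termination, contradicting terminality of $\Pi$. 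Otherwise, $P_{N+1}$ eventually settles at some chair $c^*$ and from some time $T$ on the real scheduler never moves it; pick $k^*$ with $\pi_{j^*}[k^*]=c^*$ (available by $m$-fullness) and run the analogous simulation on the same $n$-subset of $\Pi$ with $P_{j^*}$ starting at position $k^*$ and the simulated scheduler never moving $P_{j^*}$. The conflict structure seen by the pairwise-immediate scheduler depends only on chair occupancy, so this mimicry is a legal schedule; the simulated game then also runs forever, contradicting~(\ref{every_n}) for $\Pi$.

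The main obstacle is faithfulness of the simulations. In the infinite-moves case one must verify that $P_{N+1}$'s next $|\pi_{j^*}|$ moves produce chairs matching one full pass of $P_{j^*}$ through $\pi_{j^*}$, so that the scheduler's pairwise-immediate choices transfer intact. In the finite-moves case one must verify that whenever the real scheduler legally refuses to move $P_{N+1}$ (by moving its conflict partner instead), the simulated scheduler can equally refuse to move $P_{j^*}$. Both reduce to the fact that conflicts depend only on chair occupancy, not player identity, together with the construction's guarantee that $P_{N+1}$'s chair equals $P_{j^*}$'s throughout the relevant phase. The only remaining technicality is to ensure $\pi_{N+1}$ is not cyclically identical to any $\pi_j$, which is handled by choosing the power in the concatenation or by the $\otimes$-interleaving.
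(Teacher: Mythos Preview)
Your approach is essentially the same as the paper's. The paper defines $\pi_{N+1}$ as the concatenation $\pi_1'\circ\cdots\circ\pi_n'$ of (length-padded copies of) only the first $n$ words of $\Pi$, rather than all $N$; since $|\Pi'|=n-1$, this already guarantees that some segment $\pi_\alpha$ lies outside $\Pi'$, so one need not wait for a particular $j^*$. The two-case split (does $\pi_{N+1}$ move infinitely often or eventually halt on some chair $c$?) and the simulation by substituting a word from $\Pi\setminus\Pi'$ are identical to yours.

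One remark: your precaution to force $\pi_{N+1}$ cyclically distinct from each $\pi_j$ is unnecessary. The argument itself establishes that no infinite schedule exists on $\{\pi_{N+1}\}\cup\Pi'$; since two cyclically identical words would permit a lockstep infinite schedule, distinctness follows \emph{a posteriori}. Introducing the $\otimes$ interleaving would in fact complicate the simulation (the inserted letters have no counterpart in $\pi_{j^*}$), so it is better simply to drop that step.
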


\begin{proof}
Suppose that for every choice of $n$ words from $\Pi$ and for
every initial configuration no schedule lasts more than $t$ steps.
(By the pigeonhole principle $t \le L^n$, where $L$ is the length
of the longest word in $\Pi$). For a word $\pi$, let $\pi'$ be
defined as follows: If $|\pi| \ge t$, then $\pi'=\pi$. Otherwise
it consists of the first $t$ letters in $\pi^r$ where $r >
|\pi|/t$. The new word that we introduce is $\pi_{N+1} = \pi_1'
\circ \pi_2' \circ \ldots \circ \pi_n'$. It is a full word, since
it contains the full word $\pi_1$ as a sub-word.

We need to show that every set $\Pi'$ of $n-1$ words from $\Pi$
together with $\pi_{N+1}$ constitute an oblivious $MC(n,m)$
algorithm. Observe that in any infinite schedule involving these
words, the word $\pi_{N+1}$ must move infinitely often. Otherwise,
if it remains on a letter $c$ from some point on, replace the word
$\pi_{N+1}$ by an arbitrary word from $\Pi - \Pi'$ and stay put on
the letter $c$ in this word. This contradicts our assumption
concerning $\Pi$. (Note that this word contains the letter $c$ by
our fullness assumption.) But $\pi_{N+1}$ moves infinitely often,
and it is a concatenation of $n$ words whereas $\Pi'$ contains
only $n-1$ words. Therefore eventually $\pi_{N+1}$ must reach the
beginning of a word $\pi_{\alpha}$ for some $\pi_{\alpha} \not\in
\Pi'$. From this point onward, $\pi_{N+1}$ cannot proceed for $t$
additional steps, contrary to our assumption.
\end{proof}

Note that by repeated application of
Proposition~\ref{pro:full-equivalence}, we can construct an
arbitrarily large collection of $m$-full words that satisfy
condition~(\ref{every_n}).

We next deal with the following situation: Suppose that $\pi
_{1},\pi _{2},...,\pi _{m}$ is a terminal collection, and we
concatenate an arbitrary word $\sigma $ to one of the words
$\pi_{i}$. We show that by raising all words to a high enough
power we again have a terminal collection in our hands.

\begin{lemma}
\label{basic_lemma} Let $\pi _{1},\pi _{2},...,\pi _{p}$ be a
terminal collection of full words over some alphabet. Let $\sigma$
be an arbitrary full word over the same alphabet. Then the
collection
\begin{eqnarray*}
(\pi _{1})^{k},(\pi _{2})^{k},...,(\pi _{i-1})^{k},(\pi _{i}\circ
\sigma )^{2},(\pi _{i+1})^{k},...,(\pi _{p})^{k}
\end{eqnarray*}
is terminal as well, for every $1\leq i\leq p$, and every $k\geq
|\pi _{i}|+|\sigma |$.
\end{lemma}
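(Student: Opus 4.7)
The plan is by contradiction. I will suppose that there is a schedule $S$ on the new collection in which some player reaches the end of its word, and construct a schedule on the original collection that fully traverses some $\pi_l$, contradicting terminality of $\pi_1,\dots,\pi_p$.

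Set $L=|\pi_i|+|\sigma|$. The new word $(\pi_i\circ\sigma)^2=\pi_i\circ\sigma\circ\pi_i\circ\sigma$ has length $2L$ and contains $\pi_i$ verbatim at positions $[0,|\pi_i|)$ (the ``first copy'') and $[L,L+|\pi_i|)$ (the ``second copy''); likewise $(\pi_j)^k$ is $k$ copies of $\pi_j$. The central translation step is as follows: suppose $S$ contains a sub-interval during which player $i$ traverses one of these two $\pi_i$-copies entirely, so that its chair sequence is exactly $\pi_i[0],\pi_i[1],\dots,\pi_i[|\pi_i|-1]$. Then I build a schedule on the original by placing player $i$ at position $0$ of $\pi_i$ and, for each $j\neq i$, placing player $j$ at position $q_j\bmod|\pi_j|$ of $\pi_j$, where $q_j$ is its current position in $(\pi_j)^k$ at the start of the sub-interval. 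Since the chair at position $q$ of $(\pi_j)^k$ equals $\pi_j[q\bmod|\pi_j|]$, the chair structure (and therefore the conflicts and the moves of $S$) transports cleanly to the original. Either player $i$ reaches position $|\pi_i|-1$ of $\pi_i$ at the end of the sub-interval (a full traversal), or some $j$'s translated position first reaches $|\pi_j|-1$ (also a full traversal); either outcome contradicts terminality.

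The proof reduces to producing such a sub-interval, which I do by a case split on player $i$'s journey $[k_i,k_i+t_i]$ inside $(\pi_i\circ\sigma)^2$. The main case is when the journey covers all of one $\pi_i$-copy; the sub-interval is then the portion during which that copy is traversed (e.g.\ from the time player $i$ first reaches position $L$ to the time it first reaches $L+|\pi_i|-1$, for the second copy), and the translation above applies. If the journey only partially overlaps a $\pi_i$-copy but stays within one $\pi_i$-phase, the direct translation still works: the condition $k\ge L\ge 1$ implies, by a short calculation comparing $k|\pi_j|-1-k_j$ (the moves of $j$ in $S$ in Case A, when $j$ reaches the end of $(\pi_j)^k$) to the $|\pi_j|-1-(k_j\bmod|\pi_j|)$ moves needed for the translated $j$ to hit $|\pi_j|-1$, that some $j$'s translated position inevitably reaches $|\pi_j|-1$, giving the contradiction.

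\paragraph{Main obstacle.}
The delicate sub-case is when player $i$'s journey lies entirely within a $\sigma$-phase (Phase~2 or Phase~4 of $(\pi_i\circ\sigma)^2$), so its chair sequence is a run of $\sigma$-letters with no matching sub-word in $\pi_i$, and a direct chair-preserving translation for player $i$ is impossible. Here I exploit the fact that $(\pi_i\circ\sigma)^2$ is a \emph{second} power: shifting player $i$'s starting position by $\pm L$ preserves its chair sequence along the journey but alters the remaining runway, so this shift either pushes player $i$'s journey into a $\pi_i$-phase (reducing to the main case) or drives player $i$ to position $2L-1$ (the end of the new word), which invokes the main translation via the second $\pi_i$-copy. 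The bound $k\ge|\pi_i|+|\sigma|$ is calibrated precisely for this step: after allowing up to $|\pi_i|$ moves of player $i$ inside a $\pi_i$-copy plus up to $|\sigma|$ moves inside a $\sigma$-phase, the remaining $\pi_j$-cycles inside $(\pi_j)^k$ are still numerous enough for the translated schedule to force a full traversal of some $\pi_l$.
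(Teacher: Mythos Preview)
Your approach differs substantially from the paper's, and the argument for your ``delicate sub-case'' does not work.

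The paper's proof does not do a case split on player $i$'s journey inside $(\pi_i\circ\sigma)^2$. Instead it partitions the run into \emph{periods}: maximal intervals during which player $i$ does not move. In each such period player $i$ sits on a fixed chair $c$; since $\pi_i$ is full, $c$ occurs somewhere in $\pi_i$, so one simulates that period in the original collection by parking player $i$ on an occurrence of $c$ in $\pi_i$ and each player $j\neq i$ at its current position modulo $|\pi_j|$. Terminality of $\{\pi_1,\dots,\pi_p\}$ then forbids any player $j$ from completing a full $\pi_j$-copy within a single period. Pasting the periods together: if player $i$ makes fewer than $L=|\pi_i|+|\sigma|$ moves, each player $j$ completes fewer than $k$ copies of $\pi_j$ and hence does not fully traverse $(\pi_j)^k$; if player $i$ makes at least $L$ moves, its trajectory necessarily passes through position $0$ or $L$, the start of a $\pi_i$-copy, and from that moment on the direct translation applies.

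Your shift-by-$\pm L$ argument is incorrect. The two $\sigma$-phases of $(\pi_i\circ\sigma)^2$ occupy positions $[\,|\pi_i|,\,L)$ and $[\,L+|\pi_i|,\,2L)$, which are exactly $L$ apart. Hence shifting player $i$'s starting position by $\pm L$ sends one $\sigma$-phase onto the other and \emph{never} onto a $\pi_i$-phase. Nor does ``driving player $i$ to position $2L-1$'' help: position $2L-1$ is the last letter of the second $\sigma$-copy, not of a $\pi_i$-copy, so no chair-preserving embedding of player $i$'s trajectory into $\pi_i$ is available there, and reaching $2L-1$ from a positive shifted start is not a full traversal either. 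When player $i$'s entire run lies in $\sigma$, there is simply no way to transplant its chair sequence contiguously into $\pi_i$; this is precisely the obstacle the paper sidesteps by freezing player $i$ one chair at a time and invoking fullness of $\pi_i$ period by period.

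Your case split also has a gap independently of the above: a journey such as $[1,\,|\pi_i|+5]$, starting at a positive position in the first $\pi_i$-phase and ending in the first $\sigma$-phase, falls under none of your three cases, and the same translation difficulty arises the moment player $i$ crosses into $\sigma$.
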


\begin{proof}
We split the run of any schedule on these words into {\em periods}
through which we do not move along the word $(\pi _{i}\circ
\sigma)^{2}$. We claim that throughout a single period we do not
traverse a full copy of $\pi _{j}$ in our progress along the word
$(\pi _{j})^{k}$. The argument is the same as in the proof of
Proposition~\ref{pro:full-equivalence}. By pasting all these
periods together, we conclude that during a time interval in which
we advance $\leq |\pi _{i}|+|\sigma |-1$ positions along the word
$(\pi _{i}\circ \sigma )^{2}$ every other word $(\pi _{j})^{k}$
traverses at most $|\pi _{i}|+|\sigma |-1$ copies of $\pi _{j}$.
In particular, there is a whole $\pi _{j}$ in the $j$-th word in
the collection that is never visited. If the schedule ends in this
way, no word is fully traversed, and our claim holds.

So let us consider what happens when a schedule makes $\geq |\pi
_{i}|+|\sigma |$ steps along the word $(\pi _{i}\circ \sigma
)^{2}$. We must reach at some moment the start of $\pi _{i}$ in
our traversal of the word $(\pi _{i}\circ \sigma )^{2}$. But our
underlying assumption implies that from here on, no word can fully
traverse the corresponding $\pi_{k}$ (including $\pi_{i}$). Again,
no word is fully traversed, as claimed.
\end{proof}

Lemma \ref{basic_lemma} yields immediately:

\begin{cor}
\label{cor_2} Let $\pi _{1},\pi _{2},...,\pi _{p}$ be a terminal
collection of full word over some alphabet, and let
$\pi_{p+1},$ $\pi_{p+2},...,\pi _{n}$ be arbitrary full words over
the same alphabet. Then the collection
\begin{eqnarray*}
(\pi _{1}\circ \pi _{2}\circ ...\circ \pi _{n})^{2},(\pi
_{1})^{k},(\pi _{2})^{k},...,(\pi _{i-1})^{k},(\pi
_{i+1})^{k},...,(\pi _{p})^{k}
\end{eqnarray*}%
is terminal as well. This holds for every $1\leq i\leq p$ and
$k\geq \sum^{n}_{i=1}|\pi _{i}|$.
\end{cor}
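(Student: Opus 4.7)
The plan is to derive the corollary as a one-line consequence of Lemma~\ref{basic_lemma} by choosing a clever concatenation word $\sigma$ and then noticing that, at the cyclic level, the word $(\pi_i \circ \sigma)^2$ supplied by the lemma coincides with $(\pi_1 \circ \pi_2 \circ \ldots \circ \pi_n)^2$.

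First, I will define
\[
\sigma \; := \; \pi_{i+1} \circ \pi_{i+2} \circ \ldots \circ \pi_{n} \circ \pi_{1} \circ \pi_{2} \circ \ldots \circ \pi_{i-1},
\]
which is a full word over the alphabet because it contains, for example, the full word $\pi_{i+1}$ as a sub-word (the case $i=n$, where $\sigma=\pi_{1}\circ\ldots\circ\pi_{n-1}$, is handled identically). Note that $|\pi_i|+|\sigma|=\sum_{j=1}^{n}|\pi_j|$, so the hypothesis $k\ge\sum_{j=1}^{n}|\pi_j|$ matches exactly the hypothesis $k\ge|\pi_i|+|\sigma|$ needed to invoke Lemma~\ref{basic_lemma}.

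Next, I will apply Lemma~\ref{basic_lemma} to the terminal collection $\pi_1,\ldots,\pi_p$ with the above choice of $\sigma$ at position~$i$. The lemma produces the terminal collection
\[
(\pi_{1})^{k},\ldots,(\pi_{i-1})^{k},\ (\pi_{i}\circ\sigma)^{2},\ (\pi_{i+1})^{k},\ldots,(\pi_{p})^{k}.
\]
By construction, $(\pi_{i}\circ\sigma)^{2} = (\pi_i\circ\pi_{i+1}\circ\ldots\circ\pi_n\circ\pi_1\circ\ldots\circ\pi_{i-1})^{2}$, which is precisely the cyclic shift of $(\pi_1\circ\pi_2\circ\ldots\circ\pi_n)^{2}$ by $|\pi_1|+\ldots+|\pi_{i-1}|$ positions.

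Finally, I will argue that this cyclic shift is invisible to the model: the words are cyclically traversed and the scheduler dictates the initial position of each player, so the set of runs available to the scheduler when the first word is $(\pi_1\circ\ldots\circ\pi_n)^{2}$ is identical to the set of runs available when that word is replaced by any cyclic shift of it, in particular by $(\pi_i\circ\sigma)^{2}$. Consequently, the claimed collection is terminal as well. The only step that requires any care is this last translation between cyclic shifts; it will essentially just reiterate that a ``starting position'' in Corollary~\ref{cor_2}'s listing of words corresponds, via the shift, to a starting position in the listing guaranteed terminal by Lemma~\ref{basic_lemma}.
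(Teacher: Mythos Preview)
Your proposal is correct and follows exactly the paper's approach: the paper's proof is the single sentence ``This is a special case of Lemma~\ref{basic_lemma} where $\sigma=\pi_{i+1}\circ\ldots\circ\pi_n\circ\pi_1\circ\ldots\circ\pi_{i-1}$.'' Your added observation that $(\pi_i\circ\sigma)^2$ and $(\pi_1\circ\ldots\circ\pi_n)^2$ differ only by a cyclic shift---and hence define the same set of runs once the scheduler chooses starting positions---is a point the paper leaves implicit but is exactly the right justification.
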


This is a special case of Lemma~\ref{basic_lemma} where
$\sigma=\pi _{i+1}\circ \ldots \pi _{n}\circ \pi _{1}\ldots \circ
\pi _{i-1}$.

\subsection{The MC($n,2n-1$) upper bound}
\label{app:upperbound}

The proof we present shows somewhat more than
Theorem~\ref{thm:mainfull} says. A useful observation is that the scheduler
can ``trade'' a player $P$ for a chair $c$. Namely, he can keep
$P$ constantly on chair $c$ and be able, in return to move any
other player past $c$-chairs. In other words, this effectively
means the elimination of chair $c$ from all other words. This
suggests the following definition: If $\pi$ is a word over
alphabet $C$ and $B\subseteq C$, we denote by $\pi (B)$ the word
obtained from $\pi$ by deleting from it the letters from
$C\setminus B$.

Our construction is recursive. An inductive step should add one
player (i.e., a word) and two chairs. We carry out this step in
two installments: In the first we add a single chair and in the
second one we add a chair and a player. Both steps are accompanied
by conditions that counter the above-mentioned trading option.

\begin{proposition}
\label{main_prep} For every integer $n\geq 1$
\begin{itemize}
\item
There exist full words $s_{1},s_{2},...,s_{n}$ over the alphabet
$\{1,2,...,2n-1\}$ such that\\
$s_{1}(A),s_{2}(A),...,s_{p}(A)$ is a terminal collection for
every $p\leq n$, and every subset\\
$A \subseteq \{1,2,...,2n-1\}$ of cardinality $|A|=2p-1$.
\item
There exist full words $w_{1},w_{2},...,w_{n}$ over alphabet
$\{1,2...,2n\}$, such that\\
$w_{1}(B),w_{2}(B),...,w_{p}(B)$ is a terminal collection for
every $p\leq n$, and every subset\\
$B\subseteq \{1,2,...,2n\}$ of cardinality $|B|=2p-1$.
\end{itemize}
\end{proposition}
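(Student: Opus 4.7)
The plan is to prove both bullets by a joint induction on $n$. The base case $n=1$ is immediate: take $s_1 = 1$ over $\{1\}$ and $w_1 = 12$ over $\{1,2\}$; with a single player no conflict ever arises, so every restriction is trivially terminal.

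For the inductive step I follow the two-installment scheme announced in the preamble. In the first installment I pass from the $s$-family at level $n$ over $[2n-1]$ to the $w$-family at level $n$ over $[2n]$ by inserting the new chair $2n$ into each $s_i^{(n)}$ in an appropriately interleaved pattern. In the second installment I pass from the $w$-family at level $n$ over $[2n]$ to the $s$-family at level $n+1$ over $[2n+1]$ by extending each $w_i^{(n)}$ with the new chair $2n+1$ to supply the first $n$ new words, and defining the extra word $s_{n+1}^{(n+1)}$ as a concatenation, with suitably large powers, of the other words. Corollary~\ref{cor_2} then yields terminality of the full $(n+1)$-player collection on $[2n+1]$.

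The verification in each installment is a case split on whether the newly inserted chair lies in the restriction subset $A$ (or $B$). When the new chair is absent, the restriction coincides with a restriction of the previous-level words and the inductive hypothesis applies at once. When the new chair is present, I write the restriction as $A = A' \cup \{\text{new chair}\}$, and $|A'|$ has the even cardinality $2p-2$, so the inductive hypothesis does not apply off the shelf. Here I plan to invoke the trading observation from the start of this section: any putative infinite schedule against the interleaved collection must eventually pin some player on a single chair, and once this pinning occurs the effective game reduces to $p-1$ players on a subset of odd cardinality $2p-3 = 2(p-1)-1$, where the inductive hypothesis at parameter $p-1$ forces termination, a contradiction. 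Lemma~\ref{basic_lemma} is the formal tool for gluing the trading reduction to the rest of the word.

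The main obstacle I anticipate is choosing the insertion pattern cleverly enough that the scheduler is genuinely forced into such a trading reduction. A naive choice such as $w_i^{(n)} = (2n)\otimes s_i^{(n)}$ need not work in general: the scheduler can keep two players synchronized in lockstep on matching positions of the alternating pattern, producing an infinite cycle in which no player is ever pinned. To prevent this I would raise the interleaved blocks to different powers for different $i$, or prepend a full ``protective'' prefix to each word via Lemma~\ref{basic_lemma}, so that any such synchronization must break within a bounded number of rounds. Arranging the pattern so that the trading reduction goes through simultaneously for every relevant subset is the heart of the technical work.
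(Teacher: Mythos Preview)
Your framework is right---joint induction, two installments, case split on whether the new chair lies in $A$---but there is a real gap at exactly the point you flag as ``the main obstacle,'' and your proposed fixes (different powers for different $i$, protective prefixes) do not close it. The paper's construction is \emph{asymmetric} in a way yours is not: only \emph{one} designated word (call it the first) gets the dense interleaving $c\otimes(\cdot)$ with the new chair $c$, and that same word carries the concatenation of all the old words; every other word is a high power of a single old word with $c$ appended exactly \emph{once}. In your second installment you separate these roles (the concatenation goes to $s_{n+1}$, the interleaving is absent or spread over all words), and in your first installment you propose interleaving into \emph{each} $s_i$. Neither choice produces the dichotomy that drives the argument.

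The point of the asymmetry is this. When $c\in A$, look at the set $\mathcal{O}_1$ of non-first players currently on chair $c$. If $\mathcal{O}_1\neq\emptyset$, one of the ``power'' players is pinned on $c$, and the remaining $p-1$ players effectively run on $A\setminus\{c\}$ of size $2(p-1)$, which feeds into the inductive hypothesis at parameter $p-1$ (via Corollary~\ref{cor_2}). If $\mathcal{O}_1=\emptyset$, then the \emph{first} player---the only one with $c$ densely interleaved---cannot advance past $c$ and is pinned on some chair $c'$, so the other $p-1$ players run on $A\setminus\{c,c'\}$ of size $2(p-1)-1$ and the hypothesis at $p-1$ applies directly. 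Because each non-first word contains $c$ exactly once, $\mathcal{O}_1$ can change at most $2(p-1)$ times in any run, so no word can traverse more than $2n+1$ ``blocks,'' and this bounds the total run length. Your symmetric interleaving gives no player this special ``gets stuck when $c$ is empty'' role, and your single-appended-$c$ words (if that is what ``extending'' means) give no word the concatenation structure needed for the $\mathcal{O}_1\neq\emptyset$ case; splitting the two roles across different words breaks the argument. The fix is not a matter of tuning exponents---it is to put the interleaving and the concatenation on the \emph{same} word and keep $c$ scarce in all the others.
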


The words $s_{1},s_{2},...,s_{n}$ in Proposition~\ref{main_prep}
constitute a terminal collection and are hence an oblivious
$MC(n,2n-1)$ algorithm that proves the upper bound part of Theorem
\ref{thm:mainfull}. In the rest of this section we prove
Proposition~\ref{main_prep}.


\begin{proof}

As mentioned, the proof is by induction on $n$. For $n=1$ clearly
$s_{1}=11$ and $w_{1}=1122$ satisfy the conditions.

In the induction step we use the existence of
$s_{1},s_{2},...,s_{n}$ to construct $w_{1},w_{2},...,w_{n}$.
Likewise the construction of $s_{1},s_{2},...,s_{n+1}$ builds on
the existence of $w_{1},w_{2},...,w_{n}$.

\textbf{The transition from }$w_{1},w_{2},...,w_{n}$ \textbf{to }
$s_{1},s_{2},...,s_{n+1}$\textbf{:}

To simplify notations we assume that the words
$w_{1},w_{2},...,w_{n}$ in the alphabet $\{2,3,...,2n+1\}$ (rather
than $\{1,2,...,2n\}$) satisfy the proposition. Let $k:=\sum
|w_{i}|$ and define:

\begin{eqnarray*}
s_{1} &:&=1\otimes ((w_{1}\circ w_{2}\circ ...\circ w_{n})^{2(2n+1)}) \\
\forall i=2,\ldots n+1~~~ s_{i} &:&=(w_{i-1})^{k(2n+1)}\circ 1
\end{eqnarray*}

Fix a subset $A\subseteq \{1,2,...,2n+1\}$ of cardinality
$|A|=2p-1$ with $p \le n+1$, and let us show that
$s_{1}(A),s_{2}(A),...,s_{p}(A)$ is a terminal collection. There
are two cases to consider:

We first assume ${1\notin A}$. This clearly implies that $p\leq n$
(or else $A=\{1,2,...,2n+1\}$ and in particular $1\in A$). In this
case the collection is:

\begin{eqnarray*}
s_{1}(A) &:&= ((w_{1}(A)\circ w_{2}(A)\circ ...\circ w_{n}(A))^{2(2n+1)}) \\
\forall i=2,\ldots p~~~ s_{i}(A) &:&=(w_{i-1}(A))^{k(2n+1)}
\end{eqnarray*}

By the induction hypothesis, the collection
$w_{1}(A),w_{2}(A),...,w_{p-1}(A),w_{p}(A)$ is terminal. We apply
Corollary \ref{cor_2} and conclude that
\begin{eqnarray*}
(w_{1}(A)\circ w_{2}(A)\circ ...\circ
w_{n}(A))^{2},(w_{1}(A))^{k},(w_{2}(A))^{k},...,(w_{p-1}(A))^{k}
\end{eqnarray*}
is terminal as well. But the $s_{i}$ are obtained by taking
$(2n+1)$-th powers of these words, so that
$s_{1}(A),s_{2}(A),...,s_{p}(A)$ is terminal as needed.

We now consider what happens when ${1\in A}$.

We define $F_{1}:=(w_{1}(A)\circ w_{2}(A)\circ ...\circ
w_{n}(A))^{2}$ and for for $j>1$, let $F_{j}:=(w_{j-1}(A))^{k}$. We
refer to $F_{i}$ as the $i$-th block. In our construction each word
has $2n+1$ blocks, ignoring chair $1$.

At any moment throughout a schedule we denote by ${\cal O}_{1}$
the set of players in $\{P_{2},P_{3},...,P_{p}\}$ that currently
occupy chair $1$. We show that during a period in which the set
${\cal O}_1$ remains unchanged, no player can traverse a whole
block. The proof splits according to whether ${\cal O}_1$ is empty
or not.

Assume first that ${\cal O}_1\neq \emptyset$, and pick some $i>1$
for which $P_{i}$ occupies chair $1$ during the current period. As
long as ${\cal O}_1$ remains unchanged, $P_{i}$ stays on chair
$1$, so the words that the other players repeatedly traverse are
as follows: For $P_1$ it is

\[
w_{1}(A\backslash \{1\})\circ w_{2}(A\backslash
\{1\})\circ...\circ w_{n}(A\backslash \{1\})
\]

and for $P_j$ with $p \ge j \neq i \ge 2$ it is

\[
w_{j-1}(A\backslash \{1\})
\]

We now show that no player can traverse a whole block (as defined
above). Observe that the collection $\{w_{\nu}(A\backslash
\{1\})|\nu=1,\ldots,p-1\}$ (including, in particular the word
$w_{i-1}(A\backslash \{1\})$) is terminal. This follows from the
induction hypothesis, because $|A\backslash \{1\}|=2p-2$, and
because the property of being terminal is maintained under the
insertion of new chairs into words. Applying Corollary~\ref{cor_2}
to this terminal collection implies that this collection of blocks
is terminal as well.

We turn to consider the case ${\cal O}_1=\emptyset$. In this case
player $1$ cannot advance from a none-$1$ chair to the next
none-$1$ chair, since the two are separated by the presently
unoccupied chair $1$. We henceforth assume that player $P_{1}$
stays put on chair $c\neq 1$, but our considerations remain valid
even if at some moment player $P_{1}$ moves to chair $1$. (If this
happens, he will necessarily stay there, since ${\cal
O}_1=\emptyset$). We are in a situation where players
$P_{2},P_{3},...,P_{p}$ traverse the words $w_{1}(A\backslash
\{1,c\}),w_{2}(A\backslash \{1,c\}),...,w_{p-1}(A\backslash
\{1,c\})$ (chair $c$ which is occupied by player $P_{1}$ can be
safely eliminated from these words). But $|A\backslash
\{1,c\}|=2p-3$, so by the induction hypothesis no player can
traverse a whole $w_{i}(A\backslash \{1,c\})$, so no player can
traverse a whole block.

We just saw that during a period in which the set ${\cal O}_1$
remains unchanged, no player can traverse a whole block.

Finally, assume towards contradiction that $P_{j}$ fully traverses
$s_{j}$ for some index $j$, and consider the first occurrence of
such an event. It follows that $P_{j}$ has traversed $2n+1$
blocks, so that the set ${\cal O}_1$ must have changed at least
$2n+1$ times during the process. However, for ${\cal O}_1$ to
change, some $P_{i}$ must either move to, or away from a 1-chair
in $s_{i}$. But $1$ occurs exactly once in $s_{i}$, so every
$P_{i}$ can account for at most two changes in ${\cal O}_1$, a
contradiction.

\textbf{The transition from }$s_{1},s_{2},...,s_{n}$\textbf{\ to}
$w_{1},w_{2},...,w_{n}$\textbf{:}

We assume that the words $s_{1},s_{2},...,s_{n}$ in the alphabet
$\{2,3,...,2n\}$ satisfy the proposition. Let $k:=\sum |s_{i}|$
and define:

\begin{eqnarray*}
w_{1} &:&=1\otimes ((s_{1}\circ s_{2}\circ ...\circ s_{n})^{2(2n+1)}) \\
\forall i=2,\ldots,n~~~ w_{i} &:&=(s_{i-1})^{k(2n+1)}\circ 1
\end{eqnarray*}

Fix a subset $B\subseteq \{1,2,...,2n\}$ with $|B|=2p-1$. Then

\begin{eqnarray*}
w_{1}(B) &=&1\otimes ((s_{1}(B)\circ s_{2}(B)\circ ...\circ s_{n}(B))^{2(2n+1)}) \\
\forall i=2,\ldots,p~~~ w_{i}(B) &=&(s_{i-1}(B))^{k(2n+1)}\circ 1
\end{eqnarray*}

\noindent
are exactly the same as in the previous transition just by
replacing $s$ with $w$ and $A$ with $B$ (in this case the
induction hypothesis is on $s_{i} $ and we prove for $w_{i}$). So
exactly the same considerations prove that
$w_{1}(B),w_{2}(B),...,w_{m}(B)$ is a terminal collection.
\end{proof}

\section{Impossibility results}

In this section we prove the lower bound of Theorem~\ref{thm:mainfull}. As it turns out, the situation for $2n-2\ge m$ and for $m \ge 2n-1$  are dramatically different. As we saw, for $m \ge 2n-1$ the team has a winning strategy, but when $2n-2 \ge m$ not only is it true that the scheduler can win the game. He is guaranteed to have a winning startegy even if we (i) substantially relax the requirement that each word $\pi_i$ over $[m]$ be full, or (ii) restrict his power to select the players' starting position on their words. In the next proposition case (i) occurs:

\begin{proposition}\label{prop:clearly_implies}
Every team strategy $\tau_1,\ldots,\tau_n$ over $[m] = [2n-2]$ for which:
\begin{itemize}
\item
The chair $1$ appears in both $\tau_1, \tau_2$, and
\item
For every $3 \le i \le n$, the word $\tau_i$ contains both chair $2i-4$ and $2i-3$
\end{itemize}
is a losing strategy.
\end{proposition}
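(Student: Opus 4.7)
My plan is to construct, for each team strategy $\tau_1,\ldots,\tau_n$ satisfying the two hypotheses, an explicit strategy for the (pairwise immediate) scheduler that forces an infinite run. By the equivalence between the immediate and pairwise immediate schedulers established earlier in the paper, it suffices to produce such a strategy in the pairwise immediate model. The scheduler first seats $P_1$ and $P_2$ both on chair $1$ (allowed by the first hypothesis), producing an initial conflict; and for each $i\ge 3$, seats $P_i$ on one of the two chairs $\{2i-4,\,2i-3\}$ (allowed by the second hypothesis), with the particular choice dictated by the proof below. Note that the initial configuration has exactly one conflict, while the remaining $n-2$ players occupy $n-2$ distinct chairs in $\{2,\ldots,2n-3\}$, leaving chair $2n-2$ free.

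The main argument proceeds by induction on $n$. In the inductive step, the scheduler will ``trap'' player $P_n$ on one of its two specified chairs, say chair $2n-4$. By the trading idea used in Section~\ref{app:upperbound}, fixing $P_n$ on this chair effectively removes $P_n$ and chair $2n-4$ from the words of all other players. The remaining game is then on $n-1$ players over the chairs $[2n-2]\setminus\{2n-4\}$; after showing that the restricted words $\tau_i(B)$ inherit the hypotheses of the proposition at parameter $n-1$---with the ``free'' chair $2n-2$ shown to have no effect on the team's ability to escape---the inductive hypothesis yields the desired infinite run. The base case $n=2$ is handled directly: with only two chairs and both $\tau_1,\tau_2$ containing chair $1$, the scheduler keeps $P_1$ and $P_2$ in conflict indefinitely through an appropriate sequence of ``move one'' or ``move both'' operations (exploiting that moving a player whose next letter is the same chair preserves the conflict).

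The main technical obstacle is twofold: (i) ensuring that $P_n$ can be consistently trapped on chair $2n-4$ throughout an infinite run, which is possible because $\tau_n$ contains chair $2n-4$ by hypothesis, so the scheduler can always steer $P_n$ back to it even after being forced off; and (ii) arguing that the free chair $2n-2$ does not help the team escape in the reduced sub-instance. Obstacle (ii) is the more delicate one: one must show that any attempt by a remaining player to escape a conflict by moving to chair $2n-2$ can be absorbed by the scheduler into a new conflict, using the combinatorial structure imposed by the hypotheses. This is the step where a Sperner/parity argument---the topological technique advertised in the introduction---is likely to come into play, either in place of the explicit induction or as a means of certifying that the reduction to the $(n-1)$-player sub-instance is legitimate.
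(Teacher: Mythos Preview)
Your proposal has a genuine and fatal gap at the inductive step, and you essentially acknowledge it yourself in obstacle~(ii). After trapping $P_n$ on chair $2n-4$ and deleting that chair from the remaining words, you are left with $n-1$ players on $2n-3 = 2(n-1)-1$ chairs. But this is precisely the regime in which the team \emph{can} win (by the upper-bound half of Theorem~\ref{thm:mainfull}), so the inductive hypothesis for parameter $n-1$ simply does not apply: it requires $2(n-1)-2$ chairs. The extra chair $2n-2$ is not a nuisance to be ``absorbed''; its presence pushes the sub-instance across the threshold between losing and winning, and no amount of local maneuvering by the scheduler can fix this count. Your closing sentence concedes as much by deferring to an unspecified Sperner argument.

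The paper does not attempt a player-by-player induction at all. Instead, it reduces Proposition~\ref{prop:clearly_implies} to Theorem~\ref{thm:mainLB} in one line: take $\pi_1,\pi_2$ to be the suffixes of $\tau_1,\tau_2$ beginning at chair~$1$, and for each $i\ge 3$ form the pair $(\pi_{2i-4},\pi_{2i-3})$ as the two suffixes of $\tau_i$ beginning at chairs $2i-4$ and $2i-3$ respectively. Theorem~\ref{thm:mainLB} then guarantees---via a global Sperner/pseudomanifold parity argument---that \emph{some} choice of one suffix from each pair, together with $\pi_1,\pi_2$, is losing from those starting positions. The crucial point you are missing is that the scheduler's choice of starting chair for each $P_i$ (equivalently, the choice within each pair) cannot be made greedily or inductively; the existence of a good combination is a non-local fact certified by the parity of monochromatic facets. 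Your obstacle~(i), incidentally, is a non-issue: once $P_n$ sits on $2n-4$, the scheduler need never move it, since any arriving player can be moved instead---that is exactly the trading observation. The real obstruction is entirely~(ii), and it is not patchable within your framework.
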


Needless to say, this statement is invariant under permuting the player's names and the indices of the chairs. There are several such arbitrary choices of indices below and we hope that this creates no confusion.
In the impossibility results that we prove in this section, the number of chairs $m$ is always $2n-2$. We also go beyond
the lower bound of Theorem~\ref{thm:mainfull} by considering scenarios with a total of $N \ge n$
players and statements showing that there
is a choice of $n$ out of the $N$ words that constitute a losing strategy. (Clearly, new words that get added to a losing team strategy make it only easier for the scheduler to win). These deviations from the basic setup
($N \ge n$ words, weakened fullness conditions, starting points not controlled by the scheduler) give us more flexibility in our arguments and complement each other nicely.
Here is one of the main theorems that we prove in this section. It yields exponentially many subsets of $n$ words that constitute a losing team strategy.

\begin{thm}
\label{thm:mainLB} Let $N = 2n-2$ and let $\pi_1,\ldots,\pi_N$ be words over $[m]=[2n-2]$ such that the only equality among the symbols $\pi_1[1], \pi_2[1], \pi_3[1],\ldots,\pi_N[1]$ is $\pi_1[1]=\pi_2[1]$.
Then, for every partition of the words $\pi_3,\ldots,\pi_N$
into $n-2$ pairs, there is a choice of one word from each pair, such
that the chosen words together with $\pi_1$ and $\pi_2$ constitute a losing team strategy even when the
game starts on each word's first letter.\end{thm}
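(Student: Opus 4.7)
My plan is to prove Theorem~\ref{thm:mainLB} by a Sperner-type topological argument on the hypercube of selections, paralleling the Sperner-based approach the authors indicate is used for the lower bound in Theorem~\ref{thm:mainfull}. Organize the $2^{n-2}$ possible selections (one word from each of the $n-2$ pairs) as the vertices of the hypercube $Q=\{0,1\}^{n-2}$: for each $\chi\in Q$ let $T(\chi)$ denote the resulting $n$-word team, played from the initial configuration in which each player sits on its word's first letter. The goal is to exhibit some $\chi$ for which $T(\chi)$ is losing.

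The plan is to argue by contradiction: assume every $T(\chi)$ wins. By the equivalence of the canonical scheduler with the immediate one (established in Section~2), each $T(\chi)$ has a well-defined finite canonical execution ending in a terminal configuration $T^\ast(\chi)\in[m]^n$. I would extract a labeling $\ell:Q\to[m]$ from this data (natural candidates include the terminal chair of player $1$, the last chair to be vacated by one of $P_1,P_2$, or the index of the first player the canonical scheduler moves), then verify that $\ell$ satisfies a boundary/antipodality condition on the cube that, via a Sperner- or Tucker-type lemma, forces a combinatorial obstruction. The obstruction is converted back into an infinite canonical run of $T(\chi^\ast)$ for some $\chi^\ast$, contradicting the winning assumption.

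The step I expect to be the main obstacle is choosing $\ell$ so that (a) it carries a nontrivial topological signature and (b) the ``bad'' vertex extracted by Sperner really encodes a losing play. The cube structure is suggestive because two adjacent vertices $\chi,\chi'$ differ only in which word is used for a single pair, so their canonical executions either coincide outright (when the altered word never moves past its first letter) or diverge in a controlled way --- exactly the kind of controlled bifurcation Sperner-style arguments exploit. A useful combinatorial scaffold comes from Proposition~\ref{prop:clearly_implies}: after a permutation of chair indices, the $2n-3$ first letters $\pi_1[1],\pi_3[1],\ldots,\pi_N[1]$ are distinct and miss exactly one chair, so the given partition of the words into pairs induces a natural pairing of chairs that should guide the Sperner labeling and match up, at the ``bad vertex'', with the chair-pair hypothesis of Proposition~\ref{prop:clearly_implies} (reduced to the first-letter starting positions).

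Once the topological step produces $\chi^\ast$, the scheduler's infinite execution against $T(\chi^\ast)$ is built by surgery on the canonical executions at the neighboring cube vertices, together with the fact that $P_1$ and $P_2$ are in conflict at the very first step (since $\pi_1[1]=\pi_2[1]$), so the scheduler always has a move available without ever entering a terminal configuration. The bookkeeping involved in this surgery is routine once the Sperner labeling is in place; hence essentially all of the content of the theorem is concentrated in identifying the right labeling $\ell$ and verifying its topological consistency condition.
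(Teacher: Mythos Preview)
Your proposal is not a proof; it is a plan in which the central step is explicitly deferred. You write that ``essentially all of the content of the theorem is concentrated in identifying the right labeling $\ell$'', and then you do not identify it. Listing several candidate labelings and hoping one satisfies an unspecified Sperner or Tucker boundary condition is not an argument. The final ``surgery'' step, converting a bad vertex of the hypercube into an infinite schedule, is likewise left entirely unexplained, and there is no reason to expect it to be routine: a topological obstruction on terminal configurations tells you nothing direct about what the scheduler can do during the run.

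The paper's proof has a substantially different architecture, and it is worth understanding why. It does \emph{not} argue by contradiction from terminal configurations, and its simplicial complex is not your hypercube $\{0,1\}^{n-2}$ of selections. Rather, the vertices are individual player states, and the facets are \emph{reachable configurations}. Two auxiliary vertices $A_1,A_2$ are adjoined (paired with $P_1,P_2$) so that the $2^n$ initial $n$-sets---one vertex from each of the $n$ pairs---form the boundary of an $n$-cross-polytope, an honest $(n-1)$-pseudomanifold. A scheduler move on a conflicting pair is modeled as subdividing an edge, which produces a new pseudomanifold whose facets are again reachable configurations. The topological input is a $2$-coloring of vertices obtained by splitting the $2n-2$ chairs into two halves of size $n-1$; a Sperner-type parity lemma shows that the number of monochromatic facets is always even, while exactly one auxiliary facet is monochromatic. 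Hence at every stage there is a non-auxiliary monochromatic facet, which by pigeonhole contains a conflict, and the scheduler can subdivide again. This gives the infinite run directly, for some selection $\chi$ determined along the way---no contradiction, no terminal-state labeling, no surgery.

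In short: the missing idea in your plan is that the Sperner argument must track the evolving set of reachable configurations, not a static labeling of the selection hypercube; and the right complex is the cross-polytope boundary (with auxiliaries), not the cube.
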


While it is obvious that Proposition \ref{prop:clearly_implies} yields the lower bound of Theorem~\ref{thm:mainfull}, it is not entirely clear how Theorem \ref{thm:mainLB} fits into the picture. We show next how to derive Proposition \ref{prop:clearly_implies} from Theorem \ref{thm:mainLB}.

\begin{proof}[Theorem \ref{thm:mainLB} implies Proposition \ref{prop:clearly_implies}]
Let $\pi_1$ (resp. $\pi_2$) be the suffix of $\tau_1$ (resp. $\tau_2$) starting with the first appearance of the symbol $1$.
The other words come in pairs.
 For $3 \le i \le n$, we define $\pi_{2i-4}$ to be the suffix of $\tau_i$ starting at chair $2i-4$, and
$\pi_{2i-3}$ is its suffix starting at
chair $2i-3$. Theorem~\ref{thm:mainLB} implies that there is a
choice of one word from each pair that together with $\pi_1$ and
$\pi_{2}$ is losing when started from the initial chairs. The same scheduler strategy
clearly wins the game on $\tau_1,\ldots,\tau_n$ when started from
the respective chairs.
\end{proof}

The proof of Theorem \ref{thm:mainLB}, which uses some simple topological methods, is presented
in Section~\ref{sec:appLBproof}. We provide all the necessary background material
for this proof in Section~\ref{sec:appsperner}.

What happens if the fullness condition is eliminated altogether
but the scheduler maintains his right to select the starting positions? The scheduler clearly loses against the words
$\pi_i = (i)$ for $i=1,\ldots,m$. However, as the following
theorem shows once $N > m = 2n-2$, the scheduler has a winning strategy.

\begin{thm}
\label{thm:LBN} For every collection of $N=2n-1$ words over $[m] = [2n-2]$, there is a choice of $n$ words and starting
locations for which the scheduler wins.
\end{thm}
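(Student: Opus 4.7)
The plan is to reduce Theorem~\ref{thm:LBN} to Theorem~\ref{thm:mainLB}: from the $2n-1$ given words, I will select $2n-2$ of them together with starting positions whose first letters satisfy the hypothesis of Theorem~\ref{thm:mainLB} (exactly one coincident pair). Theorem~\ref{thm:mainLB} then produces the required $n$-word losing strategy for the scheduler.

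I would set this up via the bipartite incidence graph $G$ between the $2n-1$ words and the $2n-2$ chairs, where $\pi_i$ is joined to every chair of its letter set $S_i$. Since $\sum_i|S_i|\ge 2n-1>2n-2$, pigeonhole provides a chair $c$ contained in at least two words $\pi_1,\pi_2$; the scheduler starts both at $c$. It remains to pick $2n-4$ of the other $2n-3$ words and assign each a starting position so that the resulting starting letters are pairwise distinct and all different from $c$---equivalently, to find a matching of size $2n-4$ in the induced bipartite subgraph $G'$ between those $2n-3$ words and $[2n-2]\setminus\{c\}$. In the generic case such a matching exists, and Theorem~\ref{thm:mainLB} finishes the proof directly.

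The main obstacle is the degenerate case where no matching of size $2n-4$ exists in $G'$. By K\"onig's theorem (equivalently the defect form of Hall's theorem) there is then a set $T$ of words whose letter sets collectively lie in at most $|T|-1$ chairs (counting $c$). If some $n$-subset $T_n \subseteq T$ has $|\bigcup_{i\in T_n}S_i|<n$, Hall's condition fails for these $n$ players: no conflict-free configuration is reachable, so the scheduler wins simply by keeping the inevitable conflict alive forever. Otherwise, the plan is to induct on $n$: apply Theorem~\ref{thm:LBN} to the ``dense'' sub-instance on the complement of $T$ (after padding the chair set with unused dummies if its size has the wrong parity), and fill the remaining $n-n'$ player slots by picking words from $T$ that the scheduler traps on chairs outside the sub-instance via the ``player-for-chair'' trading device from Section~\ref{app:upperbound}. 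Reconciling the word counts and chair parities in the inductive step is the main technical challenge, but it can be managed by a careful choice of the set in K\"onig's theorem together with some freedom in choosing which word to discard.
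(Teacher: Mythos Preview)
Your reduction to Theorem~\ref{thm:mainLB} via a Hall-type matching is the right idea, and the generic case is fine. The degenerate branch, however, does not close as written. You propose to ``apply Theorem~\ref{thm:LBN} to the dense sub-instance on the complement of $T$'', but the words outside $T$ (together with $\pi_1,\pi_2$) number $(2n-1)-|T|$ and may still range over all $2n-2$ chairs; a genuine smaller instance of Theorem~\ref{thm:LBN} would require them to be confined to at most $(2n-2)-|T|$ chairs, which nothing guarantees. Invoking the player-for-chair trade only deletes the chairs in $\bigcup_{i\in T}S_i$, and one would then have to argue separately that enough words survive nontrivially on the remaining chairs, that the resulting counts come out with the right parity, and that the trapped players can actually be matched onto those chairs---none of which is carried out. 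You yourself flag that ``reconciling the word counts and chair parities \ldots\ is the main technical challenge''; as sketched, the inductive step is a plan, not a proof.

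The paper avoids the whole detour by choosing the obstructing set more carefully at the outset. Instead of fixing $c,\pi_1,\pi_2$ first and only then stumbling on a Hall defect, take from the start a \emph{minimal} set $S$ of words whose combined letter set has fewer than $|S|$ elements (the full collection of $2n-1$ words over $2n-2$ chairs already qualifies, so such $S$ exists). If $|S|\le n$, pigeonhole finishes immediately. If $|S|>n$, minimality delivers two facts for free: the letter set of $S$ has size exactly $|S|-1$, and every proper subset of $S$ satisfies Hall's condition. Hence for any fixed $\pi\in S$ the Marriage Theorem gives a system of distinct representatives for $S\setminus\{\pi\}$; these $|S|-1$ representatives necessarily exhaust the $|S|-1$ letters, so $\pi[1]$ coincides with exactly one of them. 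That is precisely the single-coincidence hypothesis of Theorem~\ref{thm:mainLB} on $|S|$ words (discard one word whose marked letter differs from $\pi[1]$ first if $|S|$ is odd), yielding a losing set of size $\lfloor|S|/2\rfloor+1\le n$. No induction, no parity padding, no trading.
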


\begin{proof}
By pigeonhole, the scheduler wins agianst every set of words $S$ that together contain fewer than $|S|$ different letters. If such a subcollection $S$ exists with $|S| \le n$ we are clearly done. So consider such an $S$ of smallest cardinality. By assumption $|S| >n$.
 By minimality, the total number of letters that appear in the words of $S$ is exactly
$|S|-1$. By the Marriage Theorem, for every word $\pi \in S$ it is possible to mark one letter in every word in $S \setminus \{\pi\}$ so that every letter gets marked at most once. Let $S'$ consist of $\pi$ and the suffix of every other words in $S \setminus \{\pi\}$  starting from the marked letter. If $|S|=|S'|$ is even, then Theorem \ref{thm:mainLB} applies since $S'$ has more words than letters and there is exactly one coincidence among these words' initial letters. Consequently, $S$ has a subcollection of $\frac{|S|}{2}+1\le n$ that is a losing team stratgey, as claimed. If $|S|$ is odd we first delete a word from $S$ whose marked letter differs from $\pi[1]$ and argue as above.
\end{proof}

\subsection{A few words on Sperner's lemma}
\label{sec:appsperner}

In this section we discuss our main topological tool,
the Sperner Lemma (see, e.g., \cite{AZ}). We include all the required background and try to keep our presentation
to the minimum that is necessary for a proof of Theorem~\ref{thm:mainLB}.

\begin{definition}
A {\em simplicial complex} is a collection $X$ of subsets of a finite set of {\em vertices} $V$ such that
\[
\mbox{If}~~A \in X~~\mbox{and}~~B \subseteq A~~\mbox{then}~~B \in X.
\]
A member $A \in X$ is called a {\em face}, and its {\em dimension} is defined as $\dim A :=|A|-1$. We refer to $d$-dimensional faces as $d$-faces, and define $\dim X$ as the largest dimension of a face in $X$. We note that a vertex is a $0$-face, and call a $1$-face an {\em edge}. The $1$-{\em skeleton} of $X$ is the graph with vertex set $V$, where $xy$ is an edge iff $\{x, y\}$ is an edge ($1$-face) of $X$. A face of dimension $\dim X$ is called a {\em facet}. We say that $X$ is {\em pure} if every face of $X$ is contained in {\em some} facet. Finally, a $d$-{\em pseudomanifold} is a pure $n$-dimensional complex $X$ such that
\begin{equation}\label{pseudom}
\mbox{Every face of dimension~} d-1 \mbox{~is contained in exactly two facets.}
\end{equation}
\end{definition}

A good simple example of a 2-dimensional pseudomanifold is provided by a planar graph in which all
faces including the outer face are triangles. The vertices and the edges of the complex
are just the vertices and the edges
of the graph. The facets (2-simplices) of the complex are the faces of the planar graph,
including the outer face. This is clearly a pure complex and every edge is contained in exactly two facets.
Note that such a graph drawn on a torus or on another 2-manifold works just as well. The {\em pseudo} part of the definition comes since we are allowing to carry out identifications such as the following: Take a set of vertices that forms an anticlique in the graph and identify all of them to a single vertex. The result is still a 2-dimensional psudomanifold. At any event, the uninitiated reader is encoutaged to use planar triangulations as a good mental model for a pseudomanifold. Henceforth we shorten pseudomanifold to {\em psm}.

Let $X$ be a psm on vertex set $V$.
A $k$-{\em coloring} of $X$ is a mapping $\varphi: V \to \{1, \ldots, k\}$. A face of $X$ on which $\varphi$ is $1:1$ is said to be $\varphi$-{\em rainbow} (we only say rainbow, when it is clear what coloring is involved). We are now ready to state and prove a special case of
Sperner's lemma that suffices for our purposes.

\begin{lemma}
\label{lem:sperner1} Let $X$ be an $n$-dimensional psm. Then for every $(n+1)$-coloring $\varphi$ of
$X$, the number of $\varphi$-rainbow facets of $X$  is even.
\end{lemma}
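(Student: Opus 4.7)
The plan is to prove Sperner's lemma by a standard handshake (double counting) argument on an auxiliary graph built from the pseudomanifold.

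First I would fix the coloring $\varphi : V \to \{1,\ldots,n+1\}$ and single out the color class $\{1,\ldots,n\}$. Call an $(n-1)$-face of $X$ a \emph{door} if its $n$ vertices carry exactly the $n$ colors $\{1,\ldots,n\}$, each once. I would then define an auxiliary graph $H$ whose vertex set is the collection of all facets of $X$, and in which two facets are joined by an edge whenever they share a common door. By the pseudomanifold property \eqref{pseudom}, every door lies in exactly two facets, so each door contributes exactly one edge of $H$ (possibly a loop in the pathological case that both facets coincide, but this does not affect the parity argument).

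Next I would compute the degree in $H$ of a facet $F$ by looking at which $(n-1)$-subfaces of $F$ are doors. If $F$ is $\varphi$-rainbow, then $F$ has exactly one vertex of color $n+1$; removing that vertex yields the unique door inside $F$, so $\deg_H(F)=1$. If $F$ is not rainbow but $\varphi(F)=\{1,\ldots,n\}$, then some color in $\{1,\ldots,n\}$ is used twice and color $n+1$ is missed; the doors inside $F$ are obtained by deleting one of the two vertices of the repeated color, giving exactly two doors, so $\deg_H(F)=2$. In every other case $F$ has no door at all and $\deg_H(F)=0$. Thus the rainbow facets are precisely the odd-degree vertices of $H$, and the handshake lemma forces their total number to be even.

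The only place where one has to be slightly careful is the pseudomanifold condition itself: the argument really uses that each door is contained in exactly two facets, so that a door contributes a well-defined (multi)edge of $H$. Given the definition in \eqref{pseudom} this is immediate, so I do not expect a genuine obstacle; the proof is essentially a one-paragraph degree count once the graph $H$ is set up.
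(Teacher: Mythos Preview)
Your proof is correct and is essentially the same argument as the paper's: the paper double-counts pairs $(A,B)$ with $A$ a facet and $B$ an $(n-1)$-face colored exactly $\{1,\ldots,n\}$, which is precisely your handshake count on the auxiliary graph $H$ whose edges are your ``doors''. The only cosmetic difference is that the paper states it as a direct double count rather than building $H$ explicitly; your degree case analysis (rainbow $\Rightarrow$ degree $1$, colors $\{1,\ldots,n\}$ $\Rightarrow$ degree $2$, otherwise degree $0$) is exactly what the paper leaves implicit when it says ``the claim follows''.
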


\begin{proof}
Consider all pairs $A \supset B$ with $A$ is a facet of $X$, $B$ being $(n-1)$-dimensional and $\varphi$-rainbow, where
on the vertices of $B$, $\varphi$ takes all values except for $n+1$.
We count the numberof such pairs in two different ways.

Each $(n-1)$-simplex $B$ that $\varphi$ maps onto $\{1,\ldots,n\}$ participates
in exactly two such pairs, once with each of the two facets that contain it. Hence the
total count is even.

Each rainbow facet $A$ participates in exactly one good pair $A \supset B$, where $B= A \setminus \varphi^{-1}(n+1)$.\\
The claim follows.
\end{proof}

Thus, in particular, if we 3-color the vertices of a
triangulated planar graph, so that the outer face is rainbow, then there
must be at least one more rainbow face in the triangulated planar
graph.

We say that an $n$-dimensional psm is {\em colorable} if it has a $(n+1)$-coloring for which  no edge is monochromatic. In other words, an $(n+1)$-coloring for which all facets are rainbow.

\begin{lemma}
\label{cor:sperner2} Let $\delta$ be a $2$-coloring of a colorable psm $X$.
Then the number of $\delta$-monochromatic facets of $X$ is even.
\end{lemma}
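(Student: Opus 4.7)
The plan is to reduce Lemma \ref{cor:sperner2} to Lemma \ref{lem:sperner1} by building an auxiliary $(n+1)$-coloring $\psi$ of $X$ whose rainbow facets are exactly the $\delta$-monochromatic facets. Since $X$ is colorable, I would fix a proper $(n+1)$-coloring $\varphi$, under which every facet of $X$ is $\varphi$-rainbow. This gives a convenient reference coloring that we can then twist using the 2-coloring $\delta$.

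First I would pick any cyclic permutation $\pi$ of the color set $\{1,\ldots,n+1\}$, and define
\[
\psi(v) := \begin{cases} \varphi(v) & \text{if } \delta(v) = 1, \\ \pi(\varphi(v)) & \text{if } \delta(v) = 2. \end{cases}
\]
For a facet $F$, set $T := \varphi(F \cap \delta^{-1}(2))$ and $S := \varphi(F \cap \delta^{-1}(1))$; because $F$ is $\varphi$-rainbow, $\varphi$ restricts to a bijection from $F$ onto $\{1,\ldots,n+1\} = S \sqcup T$. Then $\psi(F) = S \cup \pi(T)$, and a short check shows that $F$ is $\psi$-rainbow precisely when $\pi(T) \cap S = \emptyset$, which, given $|S|+|\pi(T)| = n+1$, is equivalent to $\pi(T) = T$.

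Since $\pi$ is an $(n+1)$-cycle, its only invariant subsets of $\{1,\ldots,n+1\}$ are $\emptyset$ and the full set. Hence $F$ is $\psi$-rainbow iff $T = \emptyset$ or $T = \{1,\ldots,n+1\}$, which is exactly the condition that $F$ is $\delta$-monochromatic. Applying Lemma \ref{lem:sperner1} to $\psi$ then yields that the number of $\delta$-monochromatic facets is even, as required.

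The main conceptual move is the choice of $\pi$: a non-cyclic permutation would have proper non-trivial invariant subsets and thus produce spurious $\psi$-rainbow facets that are not $\delta$-monochromatic, while the identity would leave $\varphi$ unchanged and tell us nothing about $\delta$. The $(n+1)$-cycle is precisely the permutation whose orbit lattice consists only of the two trivial invariant subsets, making the correspondence exact. Everything else is routine bookkeeping once this trick is in place.
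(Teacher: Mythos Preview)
Your proof is correct and follows essentially the same approach as the paper: both define an auxiliary $(n+1)$-coloring by cyclically shifting the proper coloring according to $\delta$ and then invoke Lemma~\ref{lem:sperner1}. The paper's $\varphi := \chi + \delta \bmod (n+1)$ is just your construction with the specific cycle $\pi(c)=c+1$ (after a harmless global shift), and your invariant-subset argument is a cleaner rephrasing of the paper's explicit check that some color is missed when $\delta$ is non-constant on a facet.
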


\begin{proof}
By assumption $X$ is colorable, so let $\chi$ be some proper $(n+1)$-coloring of $X$. Define next a new $(n+1)$-coloring $\varphi$ via $\varphi := \chi + \delta \mod (n+1)$. By assumption, every facet is $\chi$-rainbow and the addition ($\bmod (n+1)$) of a constant value of a monochromatic $\delta$ does not change this property. In other words, every $\delta$-monochromatic  facet is $\varphi$-rainbow. We claim the reverse implication holds as well. Indeed, if $\delta$ is not constant on the facet $A$, then we can find two vertices $x, y \in A$ for with $\delta(x)=1, \delta(y)=2$ and $\chi(y)=\chi(x)+1 \mod (n+1)$. But then no vertex $z \in A$ satisfies $\varphi(z) = \chi(x)+2 \mod (n+1)$. In other words, a facet is $\varphi$-rainbow, iff it is $\delta$-monochromatic.
By Lemma~\ref{lem:sperner1}, the proof is complete.
\end{proof}

\subsection{MC as a pseudomanifold}
\label{sec:appLBproof}

Here we prove Theorem~\ref{thm:mainLB} by using psm's
and Lemma~\ref{cor:sperner2}. While it is true that  psm's can be realized geometrically, we do not refer
to such realizations. Still, as mentioned above, planar triangulations can be useful in guiding one's intuition in this area.

Given the $N$ words, our plan is to to construct a psm $X$ that encodes certain possible executions of the MC algorithm. Vertices of $X$ correspond to states of individual players, and facets correspond to reachable configurations.  Since we limit ourselves to schedules that involve only $n$ out of the $N$ available players, every facet has $n$ vertices, so that $\dim X = n-1$.

In the setting of Theorem~\ref{thm:mainLB} the scheduler selects one player from each of
the $n-2$ pairs (and adds in players $P_1, P_2$). This gives $2^{n-2}$ possible initial
configurations, which we call {\em initial facets}. Note, however, that these $2^{n-2}$ sets do not constitute the collection of facets of a psm's. An $(n-2)$-dimensional face that contains one player from each of the $n-2$ pairs and exactly one of $P_1, P_2$ is covered by exactly one initial facet, in violation of condition (\ref{pseudom}).
To overcome this difficulty we add two auxiliary vertices called $A_1$ and $A_2$, where $A_1$ is viewed as being paired with $P_1$,
and $A_2$ with $P_2$. This yields the $2^n$ sets that are obtained by making all
possible choices of one vertex from each of the $n$ pairs. It is easily verified that this collection constitutes the set of facets of an $(n-1)$-dimensional psm. These $2^n$ facets include the $2^{n-2}$ initial
configurations, and $2^n - 2^{n-2}$ {\em auxiliary facets}, those that include at least one of $A_1, A_2$. Figure~\ref{fig:feige1} illustrates the situation for $n=3$. There are six vertices, which correspond to $N = 2n-2= 4$ players plus two auxiliary players. The vertices are 3-colored, where each pair of players (say $P_1$ and $A_1$) are equally colored. The planar graph has eight faces (including the outer face), which are the $2^3=8$ facets.

\begin{figure}[htb]
\includegraphics{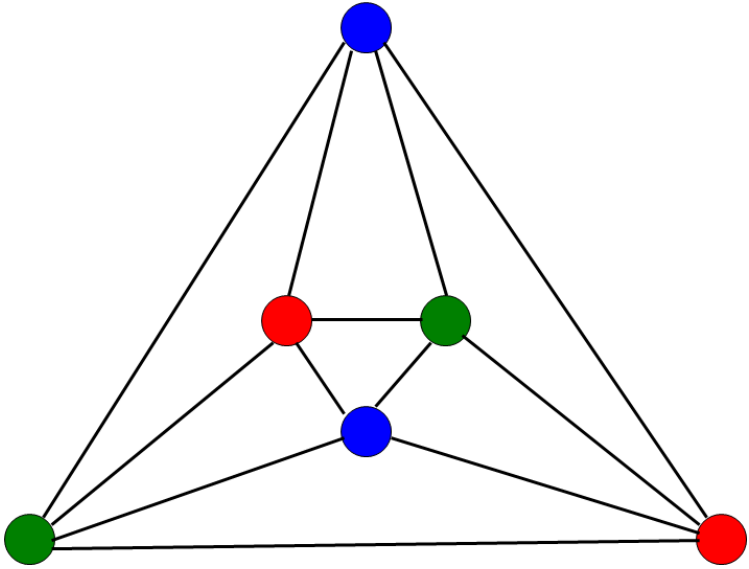}
\caption{A 3-colorable cell system of rank 3} \label{fig:feige1}
\end{figure}

Let us now introduce a 2-coloring of the vertices. We partition the $2n-2$ chairs into two subsets of cardinality $n-1$ each, called the 0-chairs and the 1-chairs. The initial chair of $P_1, P_2$ is a 1-chair whereas the initial chair of $A_1, A_2$ is a 0-chair. Also, within each pair of players (out of the $n-2$ original pairs), one
starts at a 0-chair and the other at a 1-chair. These requirements are consistent, since by assumption the only coincidence among initial chairs is that $\pi_1[1]=\pi_2[1]$.

\begin{proposition} \label{pro:initial}
The collection of all subsets of the $2^n$ initial sets is a colorable $(n-1)$-dimensional psm. In the above-described 2-coloring there is exactly one monochromatic auxiliary cell.
\end{proposition}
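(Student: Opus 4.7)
The plan is to separately verify three claims: that $X$ is pure of dimension $n-1$ and satisfies the pseudomanifold condition (\ref{pseudom}), that it admits a proper $n$-coloring with no monochromatic edge, and that the 2-coloring $\delta$ has exactly one monochromatic facet among those containing $A_1$ or $A_2$. The organizing observation is that the $2^n$ facets of $X$ are precisely the transversals of the $n$ pairs $\{P_1,A_1\}$, $\{P_2,A_2\}$, $\{\pi_{2i-4},\pi_{2i-3}\}$ for $i=3,\ldots,n$; equivalently, $X$ is the boundary complex of the $n$-dimensional cross-polytope, a standard triangulation of $S^{n-1}$.

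First I would check the structural claims. Each facet has $n$ vertices, giving $\dim X = n-1$, and every face is by definition a subset of some facet, so $X$ is pure. For (\ref{pseudom}), given any $(n-2)$-face $F$, I would observe that $F$ is a partial transversal of size $n-1$, so exactly one pair $\{u,v\}$ is disjoint from $F$, and the only two facets containing $F$ are $F\cup\{u\}$ and $F\cup\{v\}$.

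Next, to establish colorability I would assign color $i$ to both vertices of the $i$-th pair, so that $\{P_1,A_1\}$ gets color $1$, $\{P_2,A_2\}$ gets color $2$, and each original pair receives its own color in $\{3,\ldots,n\}$. Every facet, being a transversal of the pairs, is then rainbow, so in particular no edge of $X$ is monochromatic. This is exactly the hypothesis required for Lemma~\ref{cor:sperner2}.

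Finally, to count $\delta$-monochromatic auxiliary facets I would argue as follows. Since $\delta(A_1)=\delta(A_2)=0$ while $\delta(P_1)=\delta(P_2)=1$, any $\delta$-monochromatic facet containing $A_1$ or $A_2$ is constantly $0$, hence cannot contain $P_1$ or $P_2$, and must therefore contain \emph{both} $A_1$ and $A_2$. From each of the remaining $n-2$ original pairs the consistency hypothesis on $\delta$ forces a unique $0$-colored representative, so the $\delta$-monochromatic auxiliary facet is uniquely determined. I do not anticipate a genuine obstacle: the pseudomanifold verification reduces to a standard fact about transversals, and the final counting step is forced once one notes that $\delta$ cleanly separates $\{A_1,A_2\}$ from $\{P_1,P_2\}$.
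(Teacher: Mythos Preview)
Your proposal is correct and follows essentially the same approach as the paper: the paper also colors by pair to establish colorability, and also identifies the unique monochromatic auxiliary facet as the one containing $A_1$, $A_2$, and the $0$-chaired member of each original pair. Your write-up is simply more explicit on the psm verification (which the paper leaves at ``easily verified'') and on why a monochromatic auxiliary facet must contain both $A_1$ and $A_2$; the added cross-polytope remark is a helpful orientation the paper omits.
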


\begin{proof}
We already noticed that this is indeed a psm. Let us associate a unique color to each pair of paired vertices. This makes every facet
rainbow, so the psm is indeed colorable.

In the above-described 2-coloring there is indeed a unique monochromatic auxiliary facet. This the facet that contains $A_1$, $A_2$, and the
$n-2$ players (one from each pair) who start from a 0-chair.
\end{proof}

Starting from the initial system, the rules of MC allow the
scheduler to generate new psm's whose facets represent
reachable configurations. We remark that unlike the initial
system, it may happen that several facets correspond to the same configuration. This fact will cause no harm to us.

Let us consider a move of the scheduler relative to some pseudomanifold
$PSM$. At a given configuration corresponding to a facet in $PSM$, if
some players are in conflict, the scheduler may select two such
players that occupy the same chair, and move either one of them or both. Hence given the two players and their states
(say, corresponding to vertices $v_1$ and $v_2$ in $PSM$), two new states are exposed by this choice of three
possible moves. These correspond to two new vertices (say $v_1'$
and $v_2'$) in a new psm. The given configuration can
be moved to one of three new configurations, which in our
psm representation corresponds to splitting the facet $\sigma$ that
corresponds to the given configuration to three new facets.
These new facets are obtained as
$v_1$ or $v_2$ or both in $\sigma$ are replaced by $v_1'$,
$v_2'$ or both, respectively. However, we are not done yet. The edge $\{v_1, v_2\}$ may be contained in several facets of $PSM$. Each such facet corresponds to
a configuration in which the scheduler can apply any of the same
three types of moves (moving $v_1$ to $v_1'$, moving $v_2$ to
$v_2'$, or moving both). Hence every such facet is split in
three as described above. This completes the description of the new pseudomanifold $PSM'$.

We say that the above process {\em subdivides} the edge $\{v_1,v_2\}$. Figure~\ref{fig:feige2} illustrates the
subdivision process when $n = 3$. (It is convenient to have $A_1$
and $A_2$ on the outer faces of such drawings, so
that edges correspond to straight line segments.)

\begin{figure}[htb]
\includegraphics{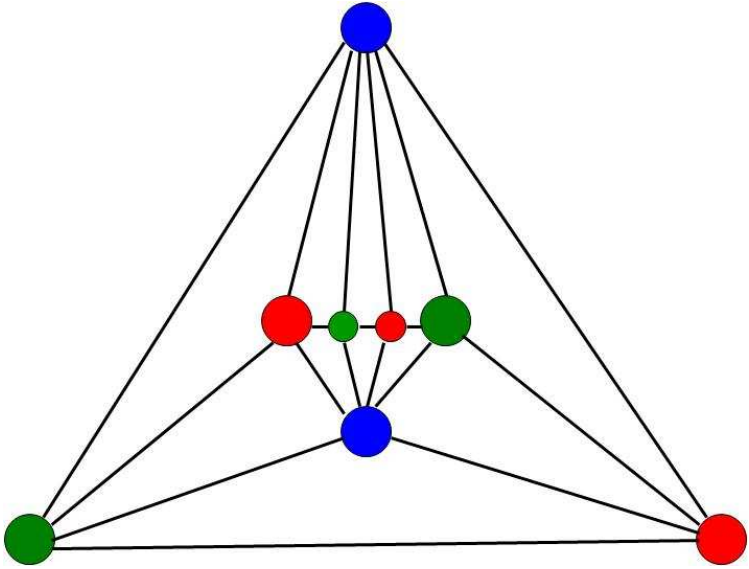}
\caption{The cell system when $n=3$ after one step by the
scheduler. The auxiliary players are the red and green vertices on
the outer face.} \label{fig:feige2}
\end{figure}

\begin{proposition}
\label{pro:auxiliary} No move of the scheduler can subdivide
an auxiliary cell.
\end{proposition}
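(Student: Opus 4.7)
The plan is to prove this by induction on the number of scheduler moves, maintaining the invariant that no auxiliary facet contains a \emph{conflict edge}, namely an edge $\{v_1,v_2\}$ whose endpoints represent two distinct real players that occupy the same chair. Since every scheduler move is, by definition, the subdivision of a conflict edge, this invariant immediately implies that no auxiliary facet can be split.

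For the base case, consider the initial pseudomanifold described in Proposition~\ref{pro:initial}. The hypothesis of Theorem~\ref{thm:mainLB} guarantees that the only coincidence among the starting symbols $\pi_1[1],\ldots,\pi_N[1]$ is $\pi_1[1]=\pi_2[1]$, so the unique conflict edge in the initial psm is $\{P_1,P_2\}$. However, every initial facet selects exactly one vertex from each of the $n$ pairs $(P_1,A_1),(P_2,A_2)$ and the $n-2$ pairs partitioning $\{P_3,\ldots,P_N\}$. An auxiliary facet, by definition, contains $A_1$ or $A_2$, and therefore omits at least one of $P_1,P_2$; hence no initial auxiliary facet contains the edge $\{P_1,P_2\}$, and the invariant holds.

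For the inductive step, suppose the invariant holds before some scheduler move. The scheduler picks some conflict edge $\{v_1,v_2\}$ to subdivide. By the invariant, this edge does not lie in any auxiliary facet, so the subdivision operation, which modifies only those facets containing the edge, leaves the whole family of auxiliary facets untouched. Every newly created facet is obtained from a parent facet containing $\{v_1,v_2\}$ by replacing $v_1$, $v_2$, or both with the freshly introduced vertices $v_1',v_2'$; all such parents were non-auxiliary, and since neither $A_1$ nor $A_2$ lies in $\{v_1,v_2,v_1',v_2'\}$, every newly created facet is also non-auxiliary. Consequently the auxiliary facets and their vertex sets are exactly the same after the move as before, so no conflict edge can have appeared inside one of them, and the invariant is preserved.

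The main conceptual obstacle is simply to recognize that the states attached to the vertices of an auxiliary facet are frozen once the facet is created, so the question of whether a conflict edge ever arises inside such a facet reduces to the initial configuration; after that the inductive bookkeeping is routine.
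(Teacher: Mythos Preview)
Your proof is correct and follows the same idea as the paper's: the real-player vertices in any auxiliary facet are always at their initial positions (hence on distinct chairs), and $A_1,A_2$ are never movable, so no conflict edge can lie inside an auxiliary facet. The paper states this in two sentences and leaves the invariant (that auxiliary facets are never touched and hence never change) implicit; you have simply made that induction explicit, which is a welcome clarification rather than a different argument.
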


\begin{proof}
Observe that $A_1$ and $A_2$ are never involved in a
subdivided edge (they are not true players and the scheduler
cannot move them). As to the rest of the vertices in an auxiliary
cell, they all correspond to different chairs, and hence cannot
pair up to create a subdivided edge.
\end{proof}

We can extend the 2-coloring of $PSM$ to that of $PSM'$ by giving the
two new vertices $v_1'$ and $v_2'$ the 0/1 color of the chairs
corresponding to their respective states.

\begin{proposition}
\label{pro:CS'} The simplicial complex $PSM'$ described above is a colorable pseudomanifold. The 2-coloring
described for it has exactly one monochromatic auxiliary cell.
\end{proposition}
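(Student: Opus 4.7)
The plan is to verify the three required properties---pseudomanifold structure, colorability, and the count of monochromatic auxiliary facets---as an inductive step, with Proposition~\ref{pro:initial} serving as the base case. Recall the description of the subdivision: for every $\sigma\in PSM$ containing the edge $\{v_1,v_2\}$, the facet $\sigma$ is replaced by $\sigma_1=\sigma-v_1+v_1'$, $\sigma_2=\sigma-v_2+v_2'$, and $\sigma_3=\sigma-v_1-v_2+v_1'+v_2'$, while every facet of $PSM$ that does not contain the edge $\{v_1,v_2\}$ survives untouched.

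For the pseudomanifold property, purity and dimension are obvious, so the crux is to show that every $(n-2)$-face $F$ of $PSM'$ lies in exactly two facets. I would argue by case analysis on how $F$ meets $\{v_1,v_2,v_1',v_2'\}$. Any face containing the deleted edge $\{v_1,v_2\}$ or one of the absent edges $\{v_1,v_1'\}$, $\{v_2,v_2'\}$ is not in $PSM'$, so there are essentially two scenarios. If $F$ contains neither $v_1'$ nor $v_2'$, then $F$ is inherited from $PSM$ and was in exactly two $PSM$-facets; I would verify that whenever one of those facets $\tau$ gets subdivided (which forces $|F\cap\{v_1,v_2\}|=1$), exactly one of $\tau_1,\tau_2,\tau_3$ still contains $F$, namely the child in which the unique element of $\{v_1,v_2\}\setminus F$ is replaced by its primed version. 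If instead $F$ contains $v_1'$ or $v_2'$, then $F$ lies only in new facets; writing the non-primed part of $F$ as $G$, the facets of $PSM'$ containing $F$ correspond bijectively to $\sigma\in PSM$ with $G\cup\{v_1,v_2\}\subseteq\sigma$. The key observation is that $G\cup\{v_1,v_2\}$ is automatically an $(n-2)$-face of $PSM$ (otherwise $F$ would have no facet in $PSM'$ covering it), and the pseudomanifold property of $PSM$ then gives exactly two such $\sigma$'s, each contributing exactly one child containing $F$.

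Colorability is quick: extend $\chi$ from $V(PSM)$ to $V(PSM')$ by $\chi(v_1'):=\chi(v_1)$ and $\chi(v_2'):=\chi(v_2)$, which is natural since $v_i$ and $v_i'$ correspond to successive states of the same player. Every new facet $\sigma_j$ has the same multiset of colors as the rainbow $\sigma$, so it too is rainbow. For the monochromatic auxiliary facet count, Proposition~\ref{pro:auxiliary} guarantees that no auxiliary cell is subdivided, and conversely no child of a subdivided $\sigma$ is auxiliary, since $A_1,A_2$ are neither among $v_1,v_2$ (they cannot be moved by the scheduler) nor equal to $v_1',v_2'$. Hence the collection of auxiliary facets and the restriction of $\delta$ to them are unchanged from $PSM$ to $PSM'$, so by the inductive hypothesis exactly one auxiliary facet of $PSM'$ is $\delta$-monochromatic. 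The principal technical obstacle in this plan is the case analysis for the pseudomanifold property, particularly the bookkeeping that shows $(n-2)$-faces of $PSM'$ involving the new vertices $v_1',v_2'$ are in bijective correspondence with $(n-2)$-faces of $PSM$ of the form $G\cup\{v_1,v_2\}$, so that the psm property of $PSM$ can be invoked to produce exactly two containing facets in $PSM'$.
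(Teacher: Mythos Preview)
Your overall strategy---induct from Proposition~\ref{pro:initial}, verify the pseudomanifold property by case analysis on how an $(n-2)$-face meets $\{v_1,v_2,v_1',v_2'\}$, extend the proper coloring via $\chi(v_i'):=\chi(v_i)$, and deduce the auxiliary-facet count from Proposition~\ref{pro:auxiliary}---is exactly the paper's approach.

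There is, however, a gap in your second case. Suppose $F$ contains exactly one primed vertex, say $v_1'$, and contains neither $v_2$ nor $v_2'$. Then $G=F\setminus\{v_1'\}$ has $n-2$ elements and $v_1,v_2\notin G$, so $|G\cup\{v_1,v_2\}|=n$: this set is a \emph{facet} of $PSM$, not an $(n-2)$-face as you claim. Consequently there is a unique $\sigma$ with $G\cup\{v_1,v_2\}\subseteq\sigma$, namely $\sigma=G\cup\{v_1,v_2\}$ itself, but this $\sigma$ contributes \emph{two} children containing $F$ (both $\sigma_1=\sigma-v_1+v_1'$ and $\sigma_3=\sigma-v_1-v_2+v_1'+v_2'$), not one. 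So your asserted bijection fails here, even though the conclusion that $F$ is covered exactly twice survives. Your argument as stated is correct only in the subcases where $F$ contains both primed vertices, or one primed vertex together with the other pair's unprimed member; there $G\cup\{v_1,v_2\}$ really is an $(n-2)$-face and each of the two $\sigma$'s containing it yields exactly one child.

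The paper's case analysis (Section~\ref{sec:caseanalysis}) splits precisely along this line: when $F'\ni v_1'$ contains neither $v_2$ nor $v_2'$, it observes directly that the two covering facets are obtained by adjoining $v_2$ and $v_2'$ respectively; only when $F'$ also contains one of $v_2,v_2'$ does it unprime to an $(n-2)$-face of $PSM$ and invoke the inductive pseudomanifold property. You should separate out that first subcase.
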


\begin{proof}
By induction and using Proposition~\ref{pro:initial} as the base
case, we may assume that $PSM$ that gives rise to $PSM'$ is a
psm. We need to show that edge subdivision (and the implied
cell subdivisions) maintains the property that each $(n-1)$-face is
covered by exactly two facets. This amounts to a simple case
analysis, presented for completeness in
Section~\ref{sec:caseanalysis}.

The colorability property of $PSM'$ is inherited from $PSM$, because
every new facet contains the same set of players as its parent facet
that was subdivided.

The 2-coloring property is a direct consequence of
Proposition~\ref{pro:initial} and Proposition~\ref{pro:auxiliary}.
\end{proof}

We now reach a crucial part of our proof. Consider a psm
$PSM$ generated by the process described above (starting from the
initial psm and subdividing edges and their respective
cells). By Proposition~\ref{pro:CS'} $PSM$ is colorable. Hence
for the 2-coloring that we associate with it, there are an even
number of monochromatic facets, by Lemma~\ref{cor:sperner2}. As
exactly one of these facets is auxiliary (by
Proposition~\ref{pro:CS'}), there is at least one non-auxiliary
monochromatic facet, and this facet corresponds to a reachable
configuration. Since there are only $n-1$ 0-chairs and $n-1$ 1-chairs,
by pigeonhole there are at least two players on the same chair in this configuration. Therefore the scheduler can
subdivide the respective edge. It follows that the scheduler can
continue to subdivide the psm indefinitely, creating arbitrarily large psm's.
Together with the next proposition this completes
the proof of Theorem~\ref{thm:mainLB}.

\begin{proposition}
\label{pro:tree} If the pseudomanifold
that is generated by the subdivision process has $2^n 3^{t-1}+1$
facets, then there is a scheduler strategy that makes the game last for at least $t$ steps.
\end{proposition}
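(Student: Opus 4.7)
The plan is to recast the subdivision process as a rooted forest and to convert a depth bound on that forest into a long game history. Assign a tree node to each facet that ever appears during the process, and take the $2^n$ initial facets as the roots of $2^n$ trees. Whenever a subdivision is performed on an edge $\{v_1,v_2\}$, the rule described in Section~\ref{sec:appLBproof} replaces every current facet containing this edge by three new facets (one for each scheduler choice: move $v_1$, move $v_2$, or both); in the forest we declare these three new facets to be the children of the replaced facet. By construction, every internal node of the forest has \emph{exactly} three children, and the leaves of the forest are in bijection with the facets of the current pseudomanifold.

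A routine induction on $d$ shows that a rooted ternary tree of depth at most $d$ has at most $3^d$ leaves (decompose at the root into three subtrees of depth $\le d-1$). Summed over the $2^n$ roots, a forest in which every tree has depth at most $d$ has at most $2^n \cdot 3^d$ leaves. Consequently, if the pseudomanifold produced by the subdivision process has $2^n 3^{t-1}+1$ facets, then the forest has strictly more than $2^n 3^{t-1}$ leaves, and so its maximum depth strictly exceeds $t-1$ and is therefore at least $t$.

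It remains to read off the scheduler's strategy from any root-to-leaf path $F^{(0)},F^{(1)},\ldots,F^{(d)}$ of length $d \ge t$. Each edge $F^{(i)}\to F^{(i+1)}$ was produced by a subdivision whose chosen edge $\{v_1,v_2\}$ lies in $F^{(i)}$, so in the configuration encoded by $F^{(i)}$ two players share a chair, and the child $F^{(i+1)}$ records one of the three legal scheduler responses (move $v_1$, move $v_2$, or move both). Starting the game at the configuration corresponding to $F^{(0)}$ and executing these moves in order therefore produces at least $t$ legal scheduler steps, as claimed.

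I expect no substantive obstacle: once the subdivision rule is packaged as a rooted ternary forest, the argument reduces to the standard leaf-versus-depth bound for ternary trees. The two items worth double-checking are that every subdivision produces exactly three children of each affected facet (so the ternary bound applies) and that successive facets along the chosen root-to-leaf path always encode a legitimate move, both of which are built directly into the definition of subdivision in Section~\ref{sec:appLBproof}.
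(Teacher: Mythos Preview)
Your proof is correct and mirrors the paper's argument almost exactly: the paper builds a single rooted tree by adding an artificial root whose $2^n$ children are the initial facets (rather than your forest of $2^n$ ternary trees), and then invokes the same leaf-count-versus-depth bound to locate a root-to-leaf path encoding $t$ scheduler moves. The extra root node merely shifts the depth count by one, so the two presentations are equivalent.
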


\begin{proof}
We describe an iterative process in which we
construct a rooted tree $T$ the reflects the process of generating the psm. The $2^n$ facets of the initial psm are $2^n$
children of the root. To represent a step in which we subdivide a facet $\sigma$, we let the leaf corresponding to $\sigma$ have three
children. Thus every
internal (neither root nor leaf) node of $T$ has exactly three children. The leaves of $T$ correspond to facets, of which $2^n - 2^{n-2}$ are
auxiliary, and the others corresponding to
reachable configurations. (These need not be distinct reachable
configurations, and not every reachable configuration is necessarily
represented by the subdivision process). If $T$ has $2^n
3^{t-1}+1$ leaves, then there must be a leaf at depth
at least $t+1$. The root to leaf path describes a possible schedule of $t$ moves, as claimed.
\end{proof}

\subsection{A case analysis} \label{sec:caseanalysis}

Here we present the case analysis used for the proof of
Proposition~\ref{pro:CS'}.

We first consider various options for an $(n-1)$-face $F$ of $PSM'$.

\begin{enumerate}

\item If $F$ contains none of $v_1,v_2,v_1',v_2'$, then it
must be a face in $PSM$ as well. The facets that contain $F$ were
not subdivided (since they could contain at most one of $v_1$ and
$v_2$), and hence also in $CS'$ there are exactly two facets containing $F$.

\item If $v_1 \in F$ the $F$ must be an $(n-1)$-face in $PSM$ as well.
Possibly some facet $C$ in $PSM$ contains $F$ and was subdivided in
going from $PSM$ to $PSM'$), if $C$ contained $v_2$. But then $PSM$
has the facet $C'$ with $v_2'$ replacing $v_2$, and the number of
facets in $PSM'$ that contain $F$ is the same as their number in $PSM$.

\item If $v_2 \in F$ of $CS'$ the same argument applies.

\item Note that $v_1, v_2 \in F$ is impossible, since edge $\{v_1,v_2\}$ was subdivided).
\end{enumerate}

We next turn to analyze various options for an $(n-1)$-face $F'$ of $PSM'$
that is not a face in $PSM$, where, say $v_1' \in F'$. Indeed $F'$ is not a face in $PSM$ (where $v_1'$ does not exist), and it results from a
subdivision step. We need to show that it is covered exactly twice following the subdivision. Consider two subcases.

\begin{itemize}
\item $F'$ contains neither $v_2$ nor $v_2'$. Then
one facet containing $F'$ has $v_2$ as its remaining vertex, and the
other cell has $v_2'$.

\item $F'$ contains either $v_2$ or $v_2'$ (note that it cannot contain both).
Consider the unique $(n-1)$-face $F$ of $PSM$ that is derived from $F'$ by
replacing $v_1'$ by $v_1$, and also $v_2'$ by $v_2$ if $v_2' \in
F'$ (this last replacement is not required if $v_2 \in F$). Then $F$
is covered twice in $PSM$, say by $C_1$ and $C_2$. Each of them is
subdivided (because they contain both $v_1$ and $v_2$), giving
rise to two facets in $PSM'$ that contain $F'$. No other facet in $PSM'$ can
contain $F'$.

\end{itemize}

The case where $v_2' \in F'$ is handled similarly.

\section{Oblivious MC algorithms via the probabilistic method}
\label{sec:efficientMC}

We start with an observation that puts Theorems~\ref{thm:7n}
and~\ref{thm:permutations} (as well as Theorem~\ref{thm:mainfull})
in an interesting perspective. The expected number of pairwise
conflicts in a random configuration is exactly ${n \choose 2}/m$.
In particular, when $m \gg n^2$, most configurations are {\em
safe} (namely, have no conflicts). Therefore, it in not
surprising that in this range of parameters $n$ random words would
yield an oblivious $MC(n,m)$ algorithm. However, when $m = O(n)$,
only an exponentially small fraction of configurations are safe,
and the existence of oblivious $MC(n,m)$ algorithms is far from
obvious.

\subsection{Full words with $O(n)$ chairs, allowing repetitions}

Theorem~\ref{thm:7n} can be viewed as a (non constructive)
derandomization of the randomized MC algorithm in which players
choose their next chair at random (and future random decisions of
players are not accessible to the scheduler). Standard techniques
for derandomizing random processes involve taking a union bound
over all possible bad events, which in our case corresponds to a
union bound over all possible schedules. The immediate
scheduler has too many options (and so does the pairwise immediate
scheduler), making it infeasible to apply a union bound. For this reason, we
shall consider in this section the canonical scheduler, which is
just as powerful (see
Section~2). In every unsafe configuration, the canonical
scheduler has just three possible moves to choose from. This
allows us to use a union bound. We now prove
Theorem~\ref{thm:7n}.

\begin{proof}
Each of the $N$ words is chosen independently at random as a
sequence of $L$ chairs, where each chair in the sequence is chosen
independently at random. We show that with high probability
(probability tending to~1 as the constant $c$ grows),
this choice satisfies Theorem~\ref{thm:7n}.

It is easy to verify that in this random construction, with high
probability, all words are full. To see this note that the
probability that chair $j$ is missing from such a random word is
$((m-1)/m)^L$. Consequently, the probability that a word chosen
this way is not full is $\le m((m-1)/m)^L$. Therefore, the
expected number of non-full words is $\le m\cdot N
\cdot((m-1)/m)^L$. But with our choice of parameters $m = 7n$ and
$L = cn\log N$, we see that $m\cdot N \cdot((m-1)/m)^L=o(1)$,
provided that $c$ is large enough.

In our approach to the proof we keep track of all possible
schedules. To this end we use ``a logbook'' that is the complete
ternary tree $\cal T$ of depth $L$ rooted at $r$. Associated with
every node $v$ of $\cal T$ is a random variable $X_v$. The values
taken by $X_v$ are system configurations. For a given choice of
words and an initial system configuration we define the value of
$X_r$ to be the chosen initial configuration. Every node $v$ has
three children corresponding to the three possible next
configurations that are available to the canonical scheduler at
configuration $X_v$.

Another important ingredient of the proof is a {\em potential}
function (defined below) that maps system configurations to the
nonnegative reals. It is also convenient to define an (artificial)
``empty'' configuration of $0$ potential. Every safe configuration
has potential $1$, and every non-empty unsafe configuration has
potential $>10$. If the node $u$ is a descendant of $v$ and the
system configuration $X_v$ is safe, then we define $X_u$ to be the
empty configuration.

We thus also associate with every node of $\cal T$ a nonnegative
random variable $P=P_v$ that is the potential of the (random)
configuration $X_v$. The main step of the proof is to show that if
$v_1,v_2,v_3$ are the three children of $v$, then $\sum_{i=1}^3
\mathbb{E}(P_{v_i}) \le r \mathbb{E}(P_v)$ for some constant $r \le
0.99$. (Note that this inequality holds as well if $X_v$ is either
safe or empty). This exponential drop implies that
\[
\mathbb{E}(\sum_{v\mbox{~is~a~leaf~of~}{\cal
T}}(P_v))=\sum_{v\mbox{~is~a~leaf~of~}{\cal T}}\mathbb{E}(P_v) =
o(1)
\]
provided that $L$ is large enough. This implies that with
probability $1-o(1)$ (over the choice of random words) all leaves
of $\cal T$ correspond to an empty configuration. In other words
every schedule terminates in fewer than $L$ steps.

We turn to the details of the proof. A configuration with $i$
occupied chairs is defined to have potential $x^{n-i}$, where $x >
1$ is a constant to be chosen later. In a nonempty configuration
the potential can vary between $1$ and $x^{n-1}$, and it equals
$1$ iff the configuration is safe.

Consider a configuration of potential $x^{n-i}$ (with $i < n$),
where the canonical pair is $(\alpha,\beta)$. It has three
children representing the move of either $\alpha$ or $\beta$ or
both. Let us denote $\rho=i/m$ and $\rho' = (i-1)/m$. When a single
player moves, the number of occupied chairs can stay unchanged,
which happens with probability $\rho$. With probability $1-\rho$
one more chair will be occupied and the potential gets divided by
$x$. Consider next what happens when both players move. Here the possible outcomes
(in terms of number of occupied chairs) depend on whether
there is an additional player $\gamma$
currently co-occupying the same chair as $\alpha$ and $\beta$.
It suffices to perform the analysis in the less favorable case in which
there is no such player $\gamma$, as this provides an upper bound on the potential also
for the case that there is such a player.
With probability $(\rho')^2$ both $\alpha$ and $\beta$ move to occupied chairs and
the potential gets multiplied by $x$. With probability
$\rho'(1-\rho') + (1 - \rho')\rho = (\rho + \rho')(1 - \rho')$ the number of occupied chairs (and hence the
potential) does not change. With probability $(1 - \rho')(1-\rho)$ the
number of occupied chairs grows by one and the potential gets
divided by $x$.

It follows that if $v$ is a node of $\cal T$ with children
$v_1,v_2,v_3$ and if the configuration $X_v$ is unsafe and
nonempty then $\sum_{i=1}^3 \mathbb{E}(P_{v_i}) \le
\mathbb{E}(P_v)(2\rho + 2(1-\rho)/x + (\rho')^2x + (\rho + \rho')(1-\rho') +
(1-\rho)(1 - \rho')/x)$. Recall that $x > 1$ and $\rho' < \rho < 1$. This implies that the last expression increases if $\rho'$ is replaced by $\rho$, and thereafter it is maximized when $\rho$ attains
its largest possible value $q = (n-1)/m$. We conclude that

\[
\sum_1^3 \mathbb{E}(P_{v_i}) \le \mathbb{E}(P)(2q + 2(1-q)/x +
q^2x + 2q(1-q) + (1-q)^2/x).
\]
We can choose $q = 1/7$ and $x = 23/2$ to obtain $\sum_{i=1}^3
\mathbb{E}(P_{v_i}) \le r \mathbb{E}(P_v)$ for $r<0.99$. This
guarantees an exponential decrease in the expected sum of
potentials and hence termination, as we now explain.

It follows that for every initial configuration the expected sum
of potentials of all leaves at depth $L$ does not exceed $x^{n-1}$
(the largest possible potential) times $r^L$. On the other hand,
if there is at least one leaf $v$ for which the configuration
$X_v$ is neither safe nor empty, then the sum of potentials at
depth $L$ is at least $x > 1$. Our aim is to show that with high
probability (over the choice of $N$ words), all runs have length
$<L$: (i) For every choice of $n$ out of the $N$ words, (ii) Each
selection of an initial configuration, and (iii) Every canonical
scheduler's strategy. The $n$ words can be chosen in $N\choose n$
ways. For every $n$ words, there are $L^n$ possible initial
configurations. The probability of length-$L$ run from a given
configuration is at most $x^{n-1} r^L$, where $x = 23/2$ and $r <
0.99$. Therefore our claim is proved if ${N \choose n} \cdot
x^{n-1} r^L \le o(1)$. This inequality clearly holds if we let $L
= cn\log N$ with $c$ a sufficiently large constant. This completes
the proof of Theorem~\ref{thm:7n}.

\end{proof}

A careful analysis of the proof of Theorem~\ref{thm:7n} shows that
it actually works as long as $\frac mn > 4+2\sqrt{2} = 6.828..$.
It would be interesting to determine the value of $\liminf_{n
\rightarrow \infty} \frac mn$ for which $n$ long enough random
words over an $m$-letter alphabet constitute, with high
probability, an oblivious $MC(n,m)$ algorithm.

\subsection{Permutations over $O(n)$ chairs}
\label{ap:permutations}

The argument we used to prove Theorem~\ref{thm:7n} is
inappropriate for the proof of Theorem~\ref{thm:permutations}.
Theorem~\ref{thm:permutations} deals with random permutations,
whereas in the proof of Theorem~\ref{thm:7n} we use words of
length $\Omega(n\log n)$. (Longer words are crucial there for two
main reasons: To guarantee that words are full and to avoid
wrap-around. The latter property is needed to guarantee
independence.) Indeed in proving Theorem~\ref{thm:permutations}
our arguments are substantially different. In particular, we work
with a pairwise immediate scheduler, and unlike the proof of
Theorem~\ref{thm:7n}, there does not appear to be any significant
benefit (e.g., no significant reduction in the ratio $\frac mn$)
if a canonical scheduler is used instead.

We first prove the special case $N=n$ of Theorem
\ref{thm:permutations}.

\begin{thm}
\label{thm:main} If $m \geq cn$ where $c > 0$ is a sufficiently
large constant, then there is a family of $n$ permutations on
$[m]$ which constitute an oblivious $MC(n,m)$ algorithm.
\end{thm}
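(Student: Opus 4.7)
My plan is to apply the probabilistic method, choosing $n$ independent uniformly random permutations $\pi_1,\ldots,\pi_n$ of $[m]$ and showing that with positive probability the resulting collection is a winning team strategy. By the equivalence established in Section~2, it suffices to rule out infinite plays against the pairwise immediate scheduler. An infinite play corresponds to a directed cycle in the state graph whose vertices are joint configurations $(k_1,\ldots,k_n)\in[m]^n$ (each $k_i$ being the current index of player $i$ in $\pi_i$) and whose edges are pairwise-immediate transitions. So the task reduces to showing that with positive probability this directed graph is acyclic.

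Before counting cycles I would record their rigid global structure. Along any cycle each player $i$ moves a multiple $c_i\cdot m$ of times, since its index in $\pi_i$ must return to its initial value; in particular any participating player ($c_i\ge 1$) traverses a complete cyclic copy of $\pi_i$, and if $k$ players participate then the length $L$ of the cycle satisfies $L\ge km/2$ because the pairwise immediate scheduler moves at most two players per step. Thus a cycle is tightly constrained: once we fix the starting configuration and the multiplicities $c_i$, the sequence of chairs visited by each participating player is determined by $\pi_i$ alone, and the only remaining freedom is the interleaving of the $n$ one-dimensional trajectories.

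I would then carry out a first-moment estimate. A potential cycle is encoded by (i) the starting configuration, (ii) the participating set $S\subseteq[n]$ with its multiplicities $c_i$, and (iii) a sequence of $L$ scheduler steps, each specifying a conflicting pair and whether one or both players move. Each step enforces an equation of the shape $\pi_i[a]=\pi_j[b]$: either (for a double-move step) the two moving players collide, or (for a single-move step) the moving player collides with some specified other player. For independent uniformly random permutations the probability that any such equation holds is $1/m$, and a careful case analysis of connected components in the graph of constraints shows that the $L$ equations are essentially independent, giving a realisation probability of at most $(1/m)^{L-O(1)}$ per cycle encoding. Multiplying by the combinatorial count of encodings, and summing the resulting geometric series over $L$ and over $S$, I would show the expected number of cycles is $o(1)$ provided $m\ge cn$ for a sufficiently large absolute constant $c$.

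The main obstacle will be making the union bound tight enough to yield $m=O(n)$ rather than the $m=\Omega(n^2)$ that falls out of a naive count. A crude enumeration in which each of the $L$ scheduler steps is charged up to $\binom{n}{2}$ pair-choices and $3$ move-type choices loses far too much. To recover a linear threshold I would exploit the rigidity observed above: once the multiplicities $c_i$ and starting positions are fixed, each participating player's chair-trajectory is completely determined, so all that remains is to count valid interleavings, which is bounded by a multinomial coefficient $\binom{L}{c_1m,\ldots,c_km}$. Comparing this multinomial to the corresponding product of $1/m$ factors, and carefully separating single-move from double-move steps (the latter contribute "two collisions for the price of one" to the combinatorics and must be treated separately), I expect to obtain an expected cycle count of the form $\sum_k\sum_L(C\cdot n/m)^{\Theta(L)}$, which is $o(1)$ for $m\ge cn$ with $c$ large enough.
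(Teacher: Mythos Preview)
Your proposal has a genuine gap at the step where you claim the $L$ collision constraints are ``essentially independent,'' giving realisation probability $(1/m)^{L-O(1)}$. This fails once players wrap around their permutations. In a cycle where $k$ players participate with multiplicities $c_1,\dots,c_k$, the entire pool of random values consists of the $km$ entries $\{\pi_i[a]:i\in S,\ a\in[m]\}$, whereas the number of constraints is $L$, which is of order $\sum_i c_i m$. Whenever some $c_i\ge 2$ the same entry $\pi_i[a]$ is reused $c_i$ times, so the constraint graph on these entries has at most $km$ vertices and the probability that all equalities hold can be no smaller than roughly $(1/m)^{km}$, not $(1/m)^{L}$. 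For large multiplicities the ratio $L/(km)$ is unbounded, and with only $(1/m)^{km}$ to set against an interleaving count of order $k^{L}$, your first-moment sum diverges. The multinomial refinement you propose addresses only the \emph{enumeration} of interleavings, not this overcounting of probabilistic constraints; there is no reason a ``connected components'' analysis should recover an exponent close to $L$.

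This wrap-around dependence is exactly the obstacle the paper isolates (``since $L\gg m$, some players may completely traverse their permutation \dots\ and therefore the chairs in these locations are no longer random''), and the whole proof architecture is built around it. The paper does \emph{not} count cycles; it bounds all schedules of a fixed length $L=O(m\log m)$, chops each into blocks of length $t=\delta m$, and separates players into \emph{light} (at most $t/\log m$ moves over the entire run, so their exposed chairs remain essentially fresh) and \emph{heavy} (at most $O(\log^2 m)$ of them, since each makes many moves). A dedicated lemma (Lemma~\ref{lem:heavy}) shows that within any block the heavy players' segments contain many \emph{unique} chairs, which must then be matched by light players; this is what restores near-independence inside each block and yields the uniform per-block bound $p(B)\le 8^{-t}$ of Lemma~\ref{lem:pB}. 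Without some analogue of this light/heavy decomposition to neutralise high-multiplicity players, a direct first-moment count over cycles will not reach $m=O(n)$.
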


We actually show that with high probability, a set of random
permutations $\pi_1, \ldots, \pi_n$ has the property that in every
possible schedule the players visit at most $L=O(m \log m)$
chairs. Our analysis uses the approach of deferring random
decisions until they are actually needed. For each of the $m^n$
possible initial configuration, we consider all possible sequences
of $L$ locations. For each such sequence we fill in the chairs in
the locations in the sequence at random, and prove that the
probability that this sequence represents a possible schedule is
extremely small -- so small that even if we take a union bound
over all initial configurations and over all sequences of length
$L$, we are left with a probability much smaller than~1.

The main difficulty in the proof is that since $L \gg m$, some
players may completely traverse their permutation (even more than
once) and therefore the chairs in these locations are no longer
random. To address this, we partition the sequence of moves into
$L/t$ blocks, where in each block players visit a total of $t$
locations. We can and will assume that $t$ divides $L$. We take
$t=\delta m$ for some sufficiently small constant $\delta$, and $n
=\epsilon m$, where $\epsilon$ is a constant much smaller than
$\delta$. This choice of parameters implies that within a block,
chairs are essentially random and independent. To deal with
dependencies among different blocks, we classify players (and
their corresponding permutations) as {\em light} or {\em heavy}. A
player is {\em light} if during the whole schedule (of length $L$)
it visits at most $t/\log m=o(t)$ locations. A player that visits
more than $t/\log m$ locations during the whole sequence is {\em
heavy}. Observe that for light players, the probability of
encountering a particular chair in some given location is at most
$\frac{1}{m - o(t)} \le \frac{1+o(1)}{m}$. Hence, the chairs
encountered by light players are essentially random and
independent (up to negligible error terms). Thus it is the heavy
players that introduce dependencies among blocks. Every heavy
player visits at least $t/\log m$ locations, so that $n_h$, the
number of heavy players does not exceed $n_h \le (L \log
m)/t=O(\log^2 m)$. The fact that the number of heavy players is
small is used in our proof to limit the dependencies among blocks.

The following lemma is used to show that in every block of length
$t$ the number of locations that are visited by heavy players
is not too large. Consequently,
sufficiently many locations are visited by
light players. In the lemma we use the following notation. A
segment of $k$ locations in a permutation is said to have {\em
volume} $k-1$. Given a collection of locations, a chair is {\em
unique} if it appears exactly once in these locations.

\begin{lemma}
\label{lem:heavy} Let $n_h \leq m/\log^2 m$ and let
$\delta > 0$ be a sufficiently small constant. Consider $n$ random
permutations over $[m]$. Select any $n_h$ of the permutations and a
starting location in each of them. Choose next intervals in the
selected permutations with total volume $t'$ for some $t/10 \le t'
\le t$. With probability $1-o(1)$ for every such set
of choices at least $4t'/5$ of the chairs in the chosen intervals
are unique.
\end{lemma}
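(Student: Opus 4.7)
My plan is to reduce the statement to a bound on the number of pairwise disjoint \emph{colliding pairs} of locations, then control the joint probability of such collisions via sequential conditioning, and finally take a union bound over the combinatorial choices. First, fix a choice of an $n_h$-subset of the $n$ permutations, a starting location in each, interval lengths summing to $t'$, and the value of $t'$. Let $K=t'+n_h$ be the total number of locations across the chosen intervals. If fewer than $4t'/5$ of the $K$ chairs are unique, then at least $K-4t'/5=t'/5+n_h$ locations carry non-unique values; since each non-unique value is repeated at least twice, there are at least $k:=\lceil t'/10\rceil$ distinct non-unique chair values, and picking two witnessing locations per value yields $k$ pairwise disjoint colliding pairs. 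Because each selected permutation contributes only one interval and a permutation takes distinct values on distinct locations, both locations of every such pair lie in two \emph{different} permutations.

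For any fixed collection of $k$ disjoint pairs I would bound their joint collision probability by revealing permutation values in a judicious order. Going through the pairs one at a time, for the $i$-th pair $(\ell_i,\ell_i')$ in permutations $\pi_{a_i},\pi_{b_i}$ I first reveal $\pi_{a_i}(\ell_i)$ and then ask whether $\pi_{b_i}(\ell_i')$ equals that value; conditioning on everything exposed so far, this event has probability at most $1/(m-t)$, because at most $t$ values of $\pi_{b_i}$ have ever been exposed (only locations inside the chosen interval of $\pi_{b_i}$ are ever inspected). Multiplying, the joint probability of all $k$ pairs colliding is at most $\bigl(m(1-\delta)\bigr)^{-k}\le(2/m)^k$. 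The number of unordered ways to choose $k$ disjoint pairs from $K\le 2t$ locations is at most $K^{2k}/(2^k k!)\le\bigl(eK^2/(2k)\bigr)^k$; substituting $K\le 2\delta m$ and $k\ge\delta m/100$ the combinatorial factor is $(C\delta)^k$ for an absolute constant $C$. Choosing $\delta$ small enough that $C\delta<1/2$, the failure probability for one configuration is at most $2^{-k}=2^{-\Omega(\delta m)}$.

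It remains to union-bound over configurations. The total number of choices of an $n_h$-subset of permutations, a starting location in each, an interval-length profile summing to $t'$, and a value $t'\in[t/10,t]$ is at most
\[
\binom{n}{n_h}\cdot m^{n_h}\cdot\binom{t'+n_h-1}{n_h-1}\cdot t \;\le\; m^{O(n_h)}\;=\;2^{O(n_h\log m)}\;=\;2^{O(m/\log m)},
\]
where the last equality uses the hypothesis $n_h\le m/\log^2 m$. Since this is dominated by the per-configuration failure bound $2^{-\Omega(\delta m)}$ once $m$ is large, the lemma follows. The main obstacle is the per-configuration probability estimate: a Markov-type bound on the expected number of collisions only yields a constant, and the pair-collision events are far from independent, so one must carefully exploit the disjointness of the pairs together with the conditional structure of independent uniform permutations to extract an exponentially small bound capable of absorbing the subexponential union-bound factor $2^{O(m/\log m)}$.
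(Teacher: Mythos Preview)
Your overall strategy is sound and takes a genuinely different route from the paper, but one intermediate step is wrong as written. From $K-u>t'/5+n_h$ non-unique locations you conclude ``since each non-unique value is repeated at least twice, there are at least $k:=\lceil t'/10\rceil$ distinct non-unique chair values.'' That inequality points the wrong way: multiplicity $\ge 2$ gives an \emph{upper} bound $(K-u)/2$ on the number of distinct non-unique values, not a lower bound (all $K-u$ non-unique locations could share a handful of values, each repeated many times). What you actually need, and what is true, is that one can extract $\Omega(t')$ pairwise-disjoint colliding pairs directly: a value of multiplicity $a\ge 2$ contributes $\lfloor a/2\rfloor\ge a/3$ disjoint pairs, so the total matching has size at least $(K-u)/3>t'/15$. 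Take $k:=\lfloor t'/15\rfloor$ instead of $\lceil t'/10\rceil$, shrink $\delta$ accordingly, and the remainder of your argument (sequential exposure, the bound $1/(m-t)$ per pair, the $(eK^2/2k)^k$ count, and the $m^{O(n_h)}=2^{O(m/\log m)}$ union bound) goes through unchanged.

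With that fix your proof differs from the paper's. The paper never isolates colliding pairs; instead it bounds the probability that the chosen intervals contain fewer than $0.9t'$ \emph{distinct} chairs, by union-bounding over the $\binom{m}{0.9t'}$ possible target sets $S$ and using that each of the $t'$ locations lands in $S$ with probability at most $0.9t'/(m-t')$. The elementary identity $u\ge 2d-\lambda$ then converts ``$\ge 0.9t'$ distinct'' into ``$\ge 0.8t'$ unique.'' Your argument trades that set-union-bound for a sequential-exposure computation that explicitly exploits independence across permutations and disjointness of the exposed locations; the paper's argument needs only the cruder fact that each new location is nearly uniform over $[m]$. Both routes give a per-configuration failure probability $e^{-\Omega(t)}$, which dominates the $m^{O(n_h)}=2^{O(m/\log m)}$ combinatorial factor.
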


\begin{proof}
We first note that we will be using the lemma with $n_h = O(\log^2
n)$. Also, if a list of letters contains $u$ unique letters (i.e.,
they appear exactly once) and $r$ repeated letter (i.e., appearing
at least twice), then it has $d=u+r$ distinct letters and length
$\lambda \ge u+2r$. In particular $d \le (\lambda+u)/2$.

There are ${n \choose n_h}$ ways of choosing $n_h$ of the
permutations. Then, there are $m^{n_h}$ choices for the initial
configuration. We denote by $s_i$ the volume of the  $i$-th
interval, so that $\sum_{i=1}^{n_h} s_i =t'$. Therefore there are
${t' + n_h - 1 \choose n_h-1} \le m^{n_h}$ ways of choosing the
intervals with total volume $t'$. Since the volume of every
interval is at most $t'$ we have that the probability that a
particular chair resides at a particular location in this interval
is at most $1/(m-t')$. This is because the permutation is random
and at most $t'$ chairs appeared so far in this interval.
Therefore the probability that a sequence of $t'$ labels involves
less than $0.9t'$ distinct chairs is at most
\begin{eqnarray*}
{m \choose 0.9t'} \left(\frac{0.9t'}{m-t'}\right)^{t'} &\leq&
\left(\frac{em}{0.9t'}\right)^{0.9t'}
\left(\frac{0.9t'}{m-t'}\right)^{t'} \leq
e^{t'} \left(\frac{m}{m-t'}\right)^{0.9t'} \left(\frac{t'}{m-t'}\right)^{0.1t'}\\
&\leq& 4^{t'} (2\delta)^{0.1t'} \ll e^{-t'}.
\end{eqnarray*}
Explanation: The set of chairs that appear in these intervals can
be chosen in ${m \choose 0.9t'}$ ways. The probability that a
particular location in this union of intervals is assigned to a
chair from the chosen set does not exceed $\frac{0.9t'}{m-t'}$. In
addition $m/(m-t') \leq (1+\delta)$, $t'/(m-t') \leq 2\delta$ and
$\delta$ is a very small constant.

Now we take a union bound over all choices of $n_h$ permutations, all
starting locations and all collection of intervals with total
volume $t'$. It follows that the probability that there is a
choice of intervals of volume $t'$ that span $\le n_h$
permutations and contain fewer than $9t'/10$ distinct chairs
is at most
\[
m^{3n_h}e^{-t'} = o(1).
\]
In the above notation $\lambda=t'$ and $d \ge 0.9t'$ which yields
$u \ge 0.8t'$ as claimed.
\end{proof}

Since the conclusion of this lemma holds with probability $1-o(1)$
we can assume that our set of permutations satisfies it. In
particular, in every collection of intervals in these permutations
with total volume $\frac{t}{10} \le t' \le t$ that reside in
$O(\log^2 m)$ permutations there are at least $4t'/5$ unique
chairs.

As already mentioned, we break the sequence of $L$ locations
visited by players into blocks of $t$ locations each. We analyze
the possible runs by considering first the {\em breakpoints
profile}, namely where each block starts and ends on each of the
$n$ words. There are $m^n$ possible choices for the starting
locations. If, in a particular block player $i$ visits $s_i$
chairs, then $\sum_{i=1}^n s_i=t$. Consequently the parameters
$s_1,\ldots,s_n$ can be chosen in ${t+n-1 \choose n} \le 2^{t+n}$
ways. There are $L/t$ blocks, so that the total number of possible
breakpoints profiles is at most $m^n (2^{t+n})^{L/t} \le m^n
2^{2L}$ (here we used the fact that $t > n$). Clearly, by
observing the breakpoints profile we can tell which players are
light and which are heavy. We recall that there are at most
$O(\log^2 m)$ heavy players, and that the premise of Lemma
\ref{lem:heavy} can be assumed to hold.

Let us fix an arbitrary particular breakpoints profile $\beta$. We
wish to estimate the probability (over the random choice of
chairs) that some legal sequence of moves by the pairwise
immediate scheduler yields this breakpoints profile $\beta$. Let
$B$ be an arbitrary block in $\beta$. Let $p(B)$ denote the
probability over choice of random chairs and {\em conditioned over
contents of all previous blocks in $\beta$} that there is a legal
sequence of moves by the pairwise immediate scheduler that
produces this block $B$.

\begin{lemma}
\label{lem:pB}
For $p(B)$ as defined above we have that $p(B) \le 8^{-t}$.
\end{lemma}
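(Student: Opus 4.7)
Proof proposal:

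The plan is to bound $p(B)$ by exploiting two facts: most chairs revealed in block $B$ are fresh and nearly uniform, and any legal pairwise-immediate strategy producing $B$ forces $\Omega(t)$ low-probability ``conflict events'' involving these fresh chairs. As the first step I argue that at least $4t/5$ of the $t$ chair-reveals in $B$ are essentially fresh independent uniform samples from $[m]$. Partition the $t$ advances of $B$ into those made by light and heavy players, with respective total volumes $t_\ell$ and $t_h$. A light player advances at most $t/\log m$ positions throughout the \emph{entire} schedule, so each such reveal is uniform over at least $m - o(t)$ remaining chairs. If $t_h \le t/10$ then light reveals alone already supply $\ge 9t/10 \ge 4t/5$ fresh uniform chairs; otherwise $t_h \in [t/10,t]$ and Lemma~\ref{lem:heavy}, applied to the heavy intervals of $B$ with $n_h = O(\log^2 m)$, yields at least $4t_h/5$ unique chairs among the heavy reveals. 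Either way we obtain at least $4t/5$ fresh, essentially uniform reveals in $B$.

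Next I fix a pairwise-immediate strategy $\sigma$ producing $B$ and bound $\Pr[\sigma \text{ is legal}]$. Traversing $\sigma$'s moves in chronological order, I maintain the configuration and identify which reveals are the fresh ones from the previous step. Legality demands that at every scheduler step the chosen pair currently shares a chair; restricted to the $\ge 4t/5$ fresh reveals, each such step forces the freshly revealed chair to coincide with some currently occupied chair of another player, an event of probability at most $(n-1)/(m - o(m)) \le 2\epsilon$ where $\epsilon = n/m$. Because each fresh reveal is (conditionally) uniform and independent of the history modulo already-revealed chairs, these conflict events multiply, giving $\Pr[\sigma \text{ is legal}] \le (2\epsilon)^{4t/5}(1+o(1))^t$.

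Finally I union-bound over all strategies $\sigma$ producing $B$. The count is controlled by the multinomial $\binom{t}{s_1,\ldots,s_n}$ of orderings of the $t$ moves, times a factor of three per step for the pairwise-immediate scheduler's move-one/move-other/move-both option. I expect this counting step to be the main obstacle: the loose bound $(3n)^t \cdot (2\epsilon)^{4t/5}$ is $\le 8^{-t}$ only when $\epsilon$ is very small, whereas we need the estimate for $m = \Theta(n)$. Closing this gap requires charging the combinatorial freedom in $\sigma$ against the independent randomness of the fresh chairs, for instance by pairing each extra ordering choice with an independent required match so that the ordering entropy is absorbed into the chair randomness, together with using the fact that $n_h = O(\log^2 m)$ greatly restricts the number of heavy orderings. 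Once this accounting is in place (with $m/n$ a sufficiently large constant and $\delta$ sufficiently small), the combined bound becomes $p(B) \le 8^{-t}$ as claimed.
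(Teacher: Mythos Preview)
Your proposal has genuine gaps at each of the three steps, and the paper's proof proceeds quite differently.

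\textbf{Step 1 is incorrect in the heavy case.} When $t_h>t/10$, Lemma~\ref{lem:heavy} gives you $\ge 4t_h/5$ chairs that are \emph{unique among the heavy reveals}, but these are not ``fresh, essentially uniform'' samples: heavy players can traverse their permutations more than once, so after conditioning on previous blocks their chairs in $B$ may be completely determined. Also, $4t_h/5$ need not be anywhere near $4t/5$. So the sentence ``either way we obtain at least $4t/5$ fresh, essentially uniform reveals'' is false as stated.

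\textbf{Step 2 misreads legality.} Legality of a pairwise-immediate step constrains the chair a player moves \emph{from} (it must be shared at that instant), not the chair it moves \emph{to}. The freshly revealed chair is the destination; nothing forces it to coincide with any currently occupied chair. One can try to rephrase this as ``each non-terminal reveal must later be matched by another player,'' but the matching player may arrive there via a future reveal, so you no longer have a sequence of conditionally independent $\le 2\epsilon$ events.

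\textbf{Step 3 you already flag as open}, and the hand-waved ``charge ordering entropy against chair randomness'' is exactly the missing idea; without it the bound $(3n)^t(2\epsilon)^{4t/5}$ is useless for $m=\Theta(n)$.

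The paper's argument sidesteps all three issues by bounding the probability of a single deterministic event about the permutations, with no union over schedules inside the block. In Case~1 ($t_h\le 0.1t$) one observes that a chair which appears exactly once among the light players' visits must be either a terminal position for some player ($\le n$ of these) or a position vacated because of a conflict with a \emph{heavy} player ($\le t_h$ of these); hence the light players visit at most $(n+t_\ell+n+t_h)/2=n+t/2$ distinct chairs. One then bounds $\Pr[\text{all light visits}\subseteq S]$ for a fixed set $S$ of size $n+t/2$ and unions over $S$. In Case~2 ($t_h\ge 0.1t$) the $\ge 0.8t_h$ unique heavy chairs from Lemma~\ref{lem:heavy} are treated as fixed targets that the light intervals must contain (otherwise the heavy players could not be pushed past them); the probability that light intervals hit $0.8t_h$ prescribed chairs is at most $t_\ell^{\,0.8t_h}((1+o(1))/m)^{0.8t_h}$. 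Both cases yield $\le 8^{-t}$ directly. The key device you are missing is this distinct-chair / unique-chair counting, which converts the scheduler's existential freedom into a single low-probability structural event about the random permutations.
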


\begin{proof}
The total number of chairs encountered in block $B$ is $n \ll t$
(for the initial locations) plus $t$ (for the moves). Recall that
the set of heavy players is determined by the block-sequence
$\beta$. Hence within block $B$ it is clear which are the heavy
players and which are the light players. Let $t_h$ (resp.
$t_{\ell} = t - t_h$) be the number of chairs visited by heavy
(resp. light) players in this block.  The proof now breaks into
two cases, depending on the value of $t_h$.

{\bf Case 1: $t_h \leq 0.1t$.} Light players altogether visit $n + t_{\ell}$
chairs ($n$ initial locations plus $t_{\ell}$ moves). If $u$ of
these chair are unique, then they visit at most $(n +
t_{\ell}+u)/2$ distinct chairs. But a chair in this collection
that is unique is either: (i) One of the $n$ chairs where a player
terminates his walk, or, (ii) A chair that a light player
traverses due to a conflict with a heavy player, and there are at
most $t_h$ of those. Consequently, the number of distinct chairs
visited by light players does not exceed $(n + t_{\ell}+n+t_h)/2 =
t/2 + n$.

Fix the set $S$ of $t/2+n$ distinct chairs that we are allowed to
use. There are ${m \choose n+t/2}$ choices for $S$. Now assign
chairs to the locations one by one, in an arbitrary order. Each
location has probability of at most $(1+o(1))\frac{n+t/2}{\MM}$ of
receiving a chair in $S$. Since we are dealing here with light
players, we have exposed only $o(m)$ chairs for each of them (in
$B$ and in previous blocks of $\beta$), and as mentioned above,
this can increase the probability by no more that a $1+o(1)$
factor.

Hence the probability that the segments traversed by the light
players contain only $n+t/2$ chairs is at most

\begin{eqnarray*}
 & {\MM \choose n+t/2}\left((1+o(1))\frac{n+t/2}{\MM}\right)^{t_{\ell}}
\leq  \left(\frac{em}{n+t/2}\right)^{n+t/2} 2^{t_{\ell}} \left(\frac{n+t/2}{m}\right)^{t_{\ell}} \\
&\leq (2e)^t \left(\frac{n+t/2}{m}\right)^{(t_{\ell}-t_h)/2-n}
\leq (2e)^t \big(t/m\big)^{t/4} < 8^{-t}.
\end{eqnarray*}
Here we used that $t_h+t_{\ell}= t$, $t_h\leq 0.1t$, $t_l \geq
0.9t$ and $n \ll t \ll m$.

{\bf Case 2: $t_h \geq 0.1t$.} Let us
reveal first the chairs visited by the heavy players. By
Lemma~\ref{lem:heavy}, we find there at least $4t_h/5$ unique
chairs. In order that the heavy players traverse these chairs,
they must be visited by light players as well. Hence the
$t_{\ell}$ locations visited by light players must include all
these $0.8t_h$ pre-specified chairs. We bound the probability of
this as follows. First choose for each of the $0.8t_h$
pre-specified chairs a particular location where it should appear
in the intervals of light players. The number of such choices is
$\le t_{\ell}^{0.8t_h}$. As mentioned above the probability that a
particular chair is assigned to some specific location is
$(1+o(1))/m$. Therefore the probability that $0.8t_h$
pre-specified chairs appear in the light intervals is at most
$t_{\ell}^{0.8t_h}\big((1+o(1))/m\big)^{0.8t_h}$. Thus the
probability that a schedule satisfying the condition of the lemma
exists is at most
\begin{eqnarray*}
 t_{\ell}^{0.8t_h}\big((1+o(1))/m\big)^{0.8t_h} \leq  \big(2t/m\big)^{0.8t_h}
&\leq&  \big(2t/m\big)^{t/15} < 8^{-t},
\end{eqnarray*}
where we used that $n \ll t \ll m$.
\end{proof}

Lemma~\ref{lem:pB} implies an upper bound of $p(B)^{L/t} = 8^{-L}$
on the probability there is a legal sequence of moves by the
pairwise immediate scheduler that gives rise to breakpoints
profile $\beta$. Taking a union bound over all block sequences
(whose number is at most $m^n 2^{2L} \le 6^L$, by our choice of
$L=C m\log m$ for a sufficiently large constant $C$),
Theorem~\ref{thm:main} is proved.

Observe that the proof of Theorem~\ref{thm:main} easily extends to
the case that there are $N = m^{O(1)}$ random permutations out of
which one chooses $n$. We simply need to multiply the number of
possibilities by $N^n$, a term that can be absorbed by increasing
$m$, similar to the way the term $\MM^n$ is absorbed. In
Lemma~\ref{lem:heavy} we need to replace ${n \choose n_h}$ by ${N
\choose n_h}$, and the proof goes through without any change
(because $n_h$ is so small). This proves
Theorem~\ref{thm:permutations}.

\subsection{Explicit construction with permutations and $m=O(n^2)$}

In this section we present for every integer $d \ge 1$ an explicit
collection of $n^d$ permutations on $m=O(d^2 n^2)$ such that every
$n$ of these permutations constitute an oblivious $MC(n,m)$
algorithm. This proves Theorem~\ref{thm:explicitperm}.

We let $LCS(\pi,\sigma)$ stand for the length of the longest
common subsequence of the two permutations $\pi$ and $\sigma$,
considered cyclically. (That is, we may rotate $\pi$ and $\sigma$
arbitrarily to maximize the length of the resulting longest common
subsequence). The following easy claim is useful.

\begin{proposition}
\label{pairwise} Let $\pi_1, \ldots, \pi_n$ be permutations of
$\{1, \ldots, \MM\}$ such that $LCS(\pi_i,\pi_j) \le r$ for all $i
\neq j$. If $m > (n-1)r$, then in every schedule none of the
$\pi_i$ is fully traversed.
\end{proposition}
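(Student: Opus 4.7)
The plan is to argue by contradiction. Suppose that in some schedule player $P_i$ fully traverses $\pi_i$, i.e., he makes at least $m$ moves along his cyclic word and hence visits each chair in $\pi_i$ at least once. I will derive a contradiction by showing that the total number of moves $P_i$ can make is at most $(n-1)r$.

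For each move of $P_i$, he must be in conflict, so some other player is on his current chair. Passing to the pairwise immediate scheduler (equivalent by the earlier proposition in Section~2), each move of $P_i$ is associated with a single canonical partner $P_j$ that shares $P_i$'s chair at that moment. For fixed $j\neq i$, let $k_{ij}$ denote the number of moves of $P_i$ whose partner is $P_j$; clearly $\sum_{j\neq i} k_{ij}\geq m$ if $P_i$ is fully traversed.

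The key step is to show $k_{ij}\leq LCS(\pi_i,\pi_j)$ for each $j$. Consider the temporal sequence of moments $t_1<t_2<\cdots<t_{k_{ij}}$ at which $P_i$ moves with partner $P_j$, and let $c_1,c_2,\ldots,c_{k_{ij}}$ be the chairs shared by the two at those moments. Since $P_i$ is fully traversing $\pi_i$, these chairs are visited by $P_i$ in the cyclic order of $\pi_i$, so they form a cyclic subsequence of $\pi_i$. On the other hand, $P_j$ is on $c_\ell$ at time $t_\ell$; between $t_\ell$ and $t_{\ell+1}$ player $P_j$ may stay put or advance some positions along $\pi_j$, but his position always respects the cyclic order of $\pi_j$. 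Hence $c_1,\ldots,c_{k_{ij}}$ is also a cyclic subsequence of $\pi_j$, yielding $k_{ij}\le LCS(\pi_i,\pi_j)\le r$.

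Summing, $m\le\sum_{j\neq i}k_{ij}\le(n-1)r$, contradicting the hypothesis $m>(n-1)r$. The main subtlety to get right is the second subsequence claim: one must check that the chairs $c_\ell$, although possibly repeated in $P_j$'s time-indexed trajectory (because $P_j$ may stand still for several rounds), appear along $\pi_j$ in the correct cyclic order when read off at the times $t_\ell$. This follows because $P_j$'s position at time $t_{\ell+1}$ is obtained from his position at $t_\ell$ by a (possibly empty) advance along $\pi_j$.
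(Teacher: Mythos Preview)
Your approach is the same as the paper's: argue by contradiction, attribute each move of the traversed player to a ``partner'', and show that the conflicts with any fixed partner form a common cyclic subsequence, then pigeonhole. One ingredient is missing, though, and without it the key step does not go through.

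You must take $P_i$ to be the \emph{first} player to be fully traversed (the paper does this explicitly). Your claim that $c_1,\ldots,c_{k_{ij}}$ is a cyclic subsequence of $\pi_j$ relies on $P_j$ not having wrapped around its permutation during $P_i$'s first $m$ moves. If $P_j$ has already completed one or more full cycles, then the chairs $c_\ell$ appear in order only along $\pi_j^k$ for some $k>1$, not along a single rotation of $\pi_j$, and the inequality $k_{ij}\le LCS(\pi_i,\pi_j)$ need not hold. Once you assume $P_i$ is first, every other $P_j$ has made fewer than $m$ moves by the time $P_i$ completes its cycle; since the $c_\ell$ are distinct chairs, $P_j$'s positions at the conflict times are strictly increasing and span a window of length $<m$ in $\pi_j$, so they do sit inside one rotation and the common-subsequence bound follows. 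A related point: you should explicitly restrict to $P_i$'s \emph{first} $m$ moves so that the $c_\ell$ are distinct (and hence a genuine subsequence of a rotation of $\pi_i$); your phrasing ``the total number of moves $P_i$ can make'' suggests you are bounding all moves, which is not what the argument gives. Finally, invoking the pairwise scheduler is harmless but unnecessary: for each move of $P_i$ you only need \emph{some} other player on the same chair, and you may choose one arbitrarily as the partner.
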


\begin{proof}
By contradiction. Consider a schedule in which one of the
permutations is fully traversed, say that $\pi_1$ is the first
permutation to be fully traversed. Each move along $\pi_1$
reflects a conflict with some other permutation. Hence there is a
permutation $\pi_i, i>1$ that has at least $\MM/(n-1)$ agreements
with $\pi_1$. Consequently, $r \ge LCS(\pi_1,\pi_i) \geq
\frac{\MM}{(n-1)}$, a contradiction.
\end{proof}

This yields an inexplicit oblivious $MC(n,m)$ algorithm with
$m=O(n^2)$, since (even exponentially) large families of
permutations in $[m]$ exist where every two permutations have an
LCS of only $O(\sqrt m)$. We omit the easy details. On the other
hand, we should notice that by~\cite{BeameBN09} this approach is
inherently limited and can, at best yield bounds of the form $m
\le O(n^{3/2})$.

We now present an explicit construction that uses some algebra.

\begin{lemma}
\label{explicit} Let $p$ be a prime power, let $d$ be a positive
integer and let $\MM=p^2$. Then there is an explicit family of
$(1-o(1))\MM^d$ permutations of an $\MM$-element set, where the
LCS of every two permutations is at most $4d\sqrt{\MM}$.
\end{lemma}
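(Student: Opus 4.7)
My plan is to index our family by low-degree polynomials over $\mathbb{F}_p$ and read the LCS bound off from the fact that level sets of bounded-degree polynomials are small. Identify the $m$-element set with $\mathbb{F}_p\times\mathbb{F}_p$. For each polynomial $f\in\mathbb{F}_p[x]$ of degree at most $2d-1$, define the permutation $\pi_f$ by placing at position $ap+b$ (for $(a,b)\in[p]\times[p]$ in lexicographic order) the element $(b,\,f(b)+a\bmod p)$. Each such assignment is evidently a bijection onto $\mathbb{F}_p\times\mathbb{F}_p$, distinct polynomials yield distinct permutations, and there are $p^{2d}=m^d$ of them---already exceeding the required $(1-o(1))m^d$ count.

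The heart of the argument is to show that $\mathrm{LCS}(\pi_f,\pi_g)<4d\sqrt{m}$ whenever $f\neq g$. The key structural observation is that the position of $(x,y)$ in $\pi_f$ factors as $u_f(x,y)\cdot p + x$, where $u_f(x,y):=(y-f(x))\bmod p\in\{0,\ldots,p-1\}$; analogously one has $u_g$ inside $\pi_g$. Given any common subsequence $(x_1,y_1),\ldots,(x_L,y_L)$, the sequences $u_i:=u_f(x_i,y_i)$ and $v_i:=u_g(x_i,y_i)$ are both nondecreasing in $i$, and setting $h:=g-f$ (a nonzero polynomial of degree at most $2d-1$) they obey the crucial identity $u_i-v_i\equiv h(x_i)\pmod p$.

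I then bound $L$ by combining two ingredients. First, for each fixed pair $(u,v)$ appearing along the monotone path $(u_i,v_i)$, the contributing elements correspond to $x$-values satisfying $h(x)\equiv u-v\pmod p$, of which there are at most $\deg h\le 2d-1$ (and $y$ is determined by $x$ and $u$). Second, a nondecreasing path in $[0,p-1]^2$ visits at most $2p-1$ distinct lattice points, since each step must advance $U$ or $V$ by at least one, with total advance at most $2(p-1)$. Multiplying yields $L\le(2p-1)(2d-1)<4dp=4d\sqrt{m}$.

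The step I expect to be the main obstacle is extending this to the cyclic LCS, where arbitrary rotations of both $\pi_f$ and $\pi_g$ are allowed before taking the common subsequence. Rotating $\pi_f$ by $r_f=\alpha_f p+\beta_f$ makes the layer order cyclic from $\alpha_f$ and splits the layer $u=\alpha_f$ across the cut. My fix is the change of variables $U_i:=(u_i-\alpha_f)\bmod p$ and $V_i:=(v_i-\alpha_g)\bmod p$: these are ordinary nondecreasing sequences in $[0,p-1]$, while the identity $u_i-v_i\equiv h(x_i)\pmod p$ (and hence the level-set bound) is untouched by a global shift. Consequently the monotone-path argument goes through verbatim in the new coordinates, yielding the same bound $(2p-1)(2d-1)<4d\sqrt{m}$ in the cyclic setting.
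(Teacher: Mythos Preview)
Your construction and block-layer analysis are essentially the paper's argument with the within-block order made explicit, and your monotone-lattice-path count is a nice way to make the ``$p$ blocks times $2d$ overlap'' estimate precise. However, there is a genuine gap in the choice of index set. You take \emph{all} polynomials of degree at most $2d-1$; in particular you allow pairs $f,g$ with $h=g-f$ a nonzero constant $c$. In that case your key step ``$h(x)\equiv u-v$ has at most $\deg h$ solutions'' collapses: when $u-v\equiv c$ the equation $h(x)=u-v$ holds for \emph{every} $x\in\mathbb{F}_p$. Concretely, with your explicit ordering one has $\pi_g[k]=\pi_f[(k+cp)\bmod m]$, so $\pi_g$ is a cyclic rotation of $\pi_f$ and the cyclic LCS is $m$, not $O(dp)$. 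The paper avoids exactly this by indexing with polynomials of degree $2d$ having vanishing constant term (so $f-g$ is never a nonzero constant), at the price of a $(1-1/p)$ factor in the count --- hence the $(1-o(1))m^d$ rather than your $m^d$.

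A secondary issue is the cyclic step. Your substitution $U_i=(u_i-\alpha_f)\bmod p$, $V_i=(v_i-\alpha_g)\bmod p$ does not in general produce nondecreasing sequences when $\beta_f$ or $\beta_g$ is nonzero: the split layer $u=\alpha_f$ then occurs both at the beginning and at the end of the rotated word, so $U_i$ can return to $0$ at the tail. A clean fix is to lift to $\mathbb{Z}$ (adding $p$ at the unique wrap in each coordinate); then both lifted coordinates are nondecreasing with total rise at most $p$ each, yielding at most $2p+1$ distinct $(u,v)$ pairs and hence LCS at most $(2p+1)\cdot 2d$. This matches the paper's $4dp$ up to an additive $O(d)$, which is harmless for the application.
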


\begin{proof}
Let $\mathbb{F}$ be the finite field of order $p$. Let ${\cal M}
:= \mathbb{F}\times \mathbb{F}$, and $\MM=p^2=|{\cal M}|$. Let $f$
be a polynomial of degree $2d$ over $\mathbb{F}$ with vanishing
constant term, and let $j \in \mathbb{F}$. We call the set
$B_{f,j}=\{(x,f(x)+j) | x \in \mathbb{F}\}$ {\em a block}. We
associate with $f$ the following permutation $\pi_f$ of ${\cal
M}$: It starts with an arbitrary ordering of the elements in
$B_{f,0}$ followed by $B_{f,1}$ arbitrarily ordered, then of
$B_{f,2}$ etc. A polynomial of degree $r$ over a field has at most
$r$ roots. It follows that for every two polynomials $f \neq g$ as
above and any $i, j \in \mathbb{F}$, the blocks $B_{f,i}$ and
$B_{g,j}$ have at most $2d$ elements in common. There are
$(p-1)\cdot p^{2d-1}=(1-o(1))\MM^d$ such polynomials.  There are
$p$ blocks in $\pi_f$ and in $\pi_g$, so that $LCS(\pi_f,\pi_g)
\le 4dp$, as claimed.

\end{proof}

\section{Discussion and Open Problems}

This work originated with the introduction of the concept of oblivious distributed algorithms. In the present paper we concentrated on oblivious MC
algorithms, a topic which yields a number of interesting mathematical challenges. We showed that $m \ge 2n-1$ chairs are necessary and
sufficient for the existence of an oblivious $MC$ algorithm with
$n$ processors. Still, our construction involves very long
words. It is interesting to find explicit constructions with $m =
2n-1$ chairs and substantially shorter words.

In other ranges of the problem we can show, using the
probabilistic method, that oblivious $MC(n,m)$ algorithms exist
with $m = O(n)$ and relatively short full words. We still do not
have explicit constructions with comparable properties. We would also like
to determine $\liminf \frac{m}{n}$ such that $n$ random words over
an $m$ letter alphabet typically constitute an oblivious $MC(n,m)$
algorithm.

Computer simulations strongly suggest that for random
permutations, a value of $m = 2n-1$ does not suffice. On the other
hand, we have constructed (details omitted from this manuscript)
oblivious $MC(n,2n-1)$ algorithms using permutations for $n=3$ and
$n=4$ (for the latter the proof of correctness is
computer-assisted). For $n \ge 5$ we have neither been able to
find such systems (not even in a fairly extensive computer search)
nor to rule out their existence.

We do not know how hard it is to recognize whether a given
collection of words constitute an oblivious $MC$ algorithm. This
can be viewed as the problem whether some digraph contains a
directed cycle or not. The point is that the digraph is presented
in a very compact form. It is not hard to place this problem in
PSPACE, but is it in a lower complexity class, such as co-NP or P?

\end{document}